\definecolor{verylight}{gray}{0.97}
\definecolor{light}{gray}{0.9}
\definecolor{medium}{gray}{0.85}
\def\NZQ{\Bbb}               % the font for N,Z,Q,R,C
\def\NN{{\NZQ N}}
\def\ZZ{{\NZQ Z}}
\def\frk{\frak}               % font for "Fraktur"
\def\Phi{{\frk n}}
\def\Phi{{\frk N}}
\def\MS{{\mathcal S}}
\def\ub{{\bold u}}
\def\wb{{\bold w}}
\def\ab{{\bold a}}
\def\cb{{\bold c}}
\def\opn#1#2{\def#1{\operatorname{#2}}} % to make operators
\opn\chara{char} \opn\length{\ell} \opn\pd{pd} \opn\rk{rk}
\opn\projdim{proj\,dim} \opn\injdim{inj\,dim} \opn\rank{rank}
\opn\depth{depth} \opn\grade{grade} \opn\height{height}
\opn\embdim{emb\,dim} \opn\codim{codim} \opn\sgn{sgn}
\opn\Tr{Tr} \opn\bigrank{big\,rank}
\opn\superheight{superheight}\opn\lcm{lcm}
\opn\trdeg{tr\,deg}%\emph{
\opn\reg{reg} \opn\lreg{lreg} \opn\ini{in} \opn\lpd{lpd}
\opn\size{size}\opn\bigsize{bigsize}
\opn\cosize{cosize}\opn\bigcosize{bigcosize}
\opn\sdepth{sdepth}\opn\sreg{sreg}
\opn\link{link}\opn\fdepth{fdepth}
\opn\div{div} \opn\Div{Div} \opn\cl{cl} \opn\Cl{Cl} \opn\Cor{Cor}
\opn\Spec{Spec} \opn\Supp{Supp} \opn\supp{supp} \opn\Sing{Sing}
\opn\Ass{Ass} \opn\Min{Min}\opn\Mon{Mon} \opn\dstab{dstab} \opn\astab{astab}
\opn\Ann{Ann} \opn\Rad{Rad} \opn\Soc{Soc} \opn\Gr{Gr}
\opn\Im{Im} \opn\Ker{Ker} \opn\Coker{Coker} \opn\Am{Am}
\opn\Hom{Hom} \opn\Tor{Tor} \opn\Ext{Ext} \opn\End{End}
\opn\Aut{Aut} \opn\id{id} \opn\span{span}
\opn\nat{nat}
\opn\pff{pf}%   \pf exists already
\opn\Pf{Pf} \opn\GL{GL} \opn\SL{SL} \opn\mod{mod} \opn\ord{ord}
\opn\Gin{Gin} \opn\Hilb{Hilb}\opn\sort{sort} \opn\Gale{Gale}
\opn\aff{aff} \opn\conv{conv} \opn\relint{relint} \opn\st{st}   \opn\cone{cone}
\opn\lk{lk} \opn\cn{cn} \opn\core{core} \opn\vol{vol}
\opn\link{link} \opn\star{star}\opn\lex{lex} \opn\Gr{Gr}
\opn\gr{gr}
\def\pot#1#2{#1[\kern-0.28ex[#2]\kern-0.28ex]}
\opn\dirlim{\underrightarrow{\lim}}
\opn\inivlim{\underleftarrow{\lim}}
\def\Implies{\ifmmode\Longrightarrow \else
        \unskip${}\Longrightarrow{}$\ignorespaces\fi}
\def\implies{\ifmmode\Rightarrow \else
        \unskip${}\Rightarrow{}$\ignorespaces\fi}
\def\iff{\ifmmode\Longleftrightarrow \else
        \unskip${}\Longleftrightarrow{}$\ignorespaces\fi}
\newtheorem{Theorem}{Theorem}[section]
\newtheorem{Lemma}[Theorem]{Lemma}
\newtheorem{Corollary}[Theorem]{Corollary}
\newtheorem{Proposition}[Theorem]{Proposition}
\newtheorem{Example}[Theorem]{Example}
\newtheorem{Definition}[Theorem]{Definition}
\let\epsilon\varepsilon
\let\kappa=\varkappa
\def\qed{\ifhmode\textqed\fi
      \ifmmode\ifinner\quad\qedsymbol\else\dispqed\fi\fi}
\def\textqed{\unskip\nobreak\penalty50
       \hskip2em\hbox{}\nobreak\hfil\qedsymbol
       \parfillskip=0pt \finalhyphendemerits=0}
\def\dispqed{\rlap{\qquad\qedsymbol}}
\opn\dis{dis}
\def\pnt{{\raise0.5mm\hbox{\large\bf.}}}
\opn\Lex{Lex}
\begin{document}

%\begin{frontmatter}

\title{On the strongly robustness property of toric ideals}
\author[1]{ Dimitra Kosta }
\author[2] {Apostolos Thoma}
\author[3] {Marius Vladoiu}
\thanks{Corresponding author: Dimitra Kosta}
\address{Dimitra Kosta, School of Mathematics, University of Edinburgh and Maxwell Institute for Mathematical Sciences, United Kingdom }
\email{D.Kosta@ed.ac.uk}
%\corauth[cor1]{Corresponding author}

\address{Apostolos Thoma, Department of Mathematics, University of Ioannina, Ioannina 45110, Greece}
\email{athoma@uoi.gr}

\address{Marius Vladoiu, Faculty of Mathematics and Computer Science, University of Bucharest, Str. Academiei 14, Bucharest, RO-010014, Romania, and}
\address{``Simion Stoilow" Institute of Mathematics of the Romanian Academy, P.O. Box 1-764, 014700, Bucharest, Romania}
\email{vladoiu@fmi.unibuc.ro}

\subjclass[2010]{13F65, 13P10, 14M25, 05C90, 62R01}
\keywords{Toric ideals,  Graver basis, Universal Gr\" obner basis,  Markov bases, Robust ideals}

\begin{abstract} To every toric ideal one can associate an oriented matroid structure, consisting of a graph and another toric ideal, called bouquet ideal. The connected components of this graph are called bouquets.
Bouquets are of three types; free, mixed and non mixed.
We prove that the cardinality of the following sets - the set of indispensable elements, minimal Markov bases, the Universal Markov basis and
the Universal Gr{\" o}bner basis of a toric ideal -
depends only on the type of the bouquets and the bouquet ideal.  These results enable us to introduce the strongly robustness
simplicial complex and show that it determines the strongly robustness property. For codimension 2 toric ideals, we study the strongly robustness simplicial complex and prove that robustness implies strongly robustness.
\end{abstract}
\maketitle
%\end{frontmatter}

\section{Introduction}
An ideal in a polynomial ring is called \textit{robust} if it is minimally generated by the Universal Gr{\"o}bner basis  (see \cite{BR}), where the Universal Gr{\"o}bner basis is the union of all reduced  Gr{\"o}bner bases.
For   toric ideals, the Universal  Gr{\"o}bner basis is always contained in the Graver basis, see \cite[Chapter 4]{St}. If $I_A$ is positively graded then every minimal system of generators of a toric ideal $I_A$ is contained in
the Graver basis, see \cite[Theorem 2.3]{CTV2}.
Thus there exists a stronger notion of robustness for positively graded toric ideals. A positively graded toric ideal is called \textit{strongly robust} if it is minimally
generated by its Graver basis, see \cite{S}. It follows  that for a strongly robust toric ideal $I_A$  the
following sets are identical: the set of indispensable elements, any minimal system of generators, any reduced Gr{\"o}bner basis, the Universal Gr{\"o}bner basis and the Graver basis, see for example \cite[Section 4]{PTV}. Robust and strongly robust ideals have been studied in several articles \cite{BZ, B, BR,  BBDLMNS, cng, CNG, G-MT, GP, PTV, St, SZ, S, T}.

 Strongly robust toric ideals appear
in geometry,
as defining ideals of non-pyramidal self-dual projective toric varieties, see \cite{BDR, TV}; in combinatorial commutative algebra,
as toric ideals of Lawrence matrices helping in the  computation of the Graver basis, see \cite[Chapter 7]{St};
in algebraic statistics, as toric ideals whose Markov bases have the distance-reducing property, see \cite{S, TA}.
Note that for a toric ideal being
strongly robust implies being robust. In \cite{BBDLMNS}, Boocher et al. wonder whether robust toric ideals are always strongly robust. In fact, they proved this is the case for
toric ideals of graphs. On the other hand, based on \cite[Theorem 4.2, Remark 4.4]{PTV2}, using the theory of bouquets, \cite{PTV}, it follows that if robust toric ideals of hypergraphs are strongly robust then this is true for all positively graded toric ideals.

Theorem 4.2 of \cite{PTV} suggests a special relation
between  strongly robust ideals and bouquets.
For example, if all the bouquets are mixed then the toric ideal is strongly robust, see \cite[Corollary 4.4]{PTV}, thus Petrovi{\' c} et al. raised the question whether
all strongly robust toric ideals have at least one mixed bouquet. This question was answered in affirmative
by Sullivant \cite{S} for codimension 2 toric ideals based on the
reduced Gale diagrams, a useful tool given  by Peeva and Sturmfels in \cite{PS} for studying problems related to codimension 2 toric ideals. Note that reduced Gale diagrams have a strong relationship with the bouquet structure and they also preserve information about the mixedness or not of the bouquets, see Section 4 of this article.

In Section~\ref{section:bouquets}, we strengthen the bouquet results of \cite{PTV} in the following manner.
To every bouquet we can assign a sign $+$ if it is non-mixed, $-$ if it is mixed and $0$ if it is free. We prove that if two toric ideals  have the same bouquet ideal and the same signatures on the corresponding bouquets, then there is a bijective correspondence between
 indispensables, Markov bases, Gr{\"o}bner bases and the universal Gr{\"o}bner bases   of the two ideals.
 The corresponding result for circuits and the Graver bases was proved in \cite{PTV}.  In Section~\ref{section:stronglyrobust}, the previous results help us to define the strongly robustness simplicial complex, about which we show that determines the strongly robustness property for toric ideals, thus partially answering a question posed by Boocher and Robeva \cite[Question 3.5]{BR}. We show how
one can use the second Lawrence lifting to compute the strongly robustness simplicial complex. Then the theory of generalized Lawrence matrices of \cite[Section 2]{PTV} produces examples of strongly robust toric ideals; actually any strongly robust toric ideal can be produced in this way. In Section~\ref{section:codim2}, we use the results of \cite{PS, S} in codimension 2 to provide a Hilbert basis method to compute the strongly robustness simplicial complex and prove that, for codimension 2 toric ideals, robustness implies strongly robustness. Finally, using \cite[Lemma 2.7]{S}, we prove an interesting property of codimension 2 ideals: if circuits are indispensable then all elements of the Graver basis are indispensable.

 \section{Bouquet Ideals }\label{section:bouquets}
 Let $K$ be a field and $A=[\ab_1,\dots,\ab_n]\in\ZZ^{m\times n}$ be an integer matrix  with the set of column vectors $\{\ab_1,\ldots,\ab_n\}$ such that $\Ker_{\ZZ}(A)\cap \NN^n=\{{\mathbf{0}}\}$. The toric ideal of $A$ is the ideal $I_A\subset K[x_1,\ldots,x_n]$ generated by the binomials ${x}^{{\ub}^+}-{x}^{{\ub}^-}$
 where ${\ub}\in\Ker_{\ZZ}(A),$
and ${\bf u}={\bf u}^+-{\bf u}^-$ is the expression of  ${\bf u}$ as a difference of two non-negative vectors with disjoint support, see \cite[Chapter 4]{St}. The condition $\Ker_{\mathbb{Z}}(A)\cap \mathbb{N}^n=\{{\bf 0}\} $ means that there is a strictly positive vector on the row span of the matrix $A$, therefore the toric ideal is positively graded. To the matrix $A$ we associate its Gale transform, which is the $n\times (n-r)$ matrix whose columns span the lattice $\Ker_{\ZZ}(A)$, and $r$ is the rank of $A$. We will denote the set of ordered row vectors of the Gale transform by $\{{\bf b}_1, \dots, {\bf b}_n\}$. The vector ${\bf a}_i$ is called {\em free} if its Gale transform ${\bf b}_i$ is equal to the zero vector, which means that $i$ is not contained in the support of any element in $\Ker_{\mathbb{Z}}(A)$.
 The {\em bouquet graph} $G_A$ of $I_A$ is the graph on the set of vertices $\{{\bf a}_1,\dots, {\bf a}_n\}$, whose edge set $E_A$ consists of those $\{\ab_i,\ab_j\}$ for which ${\bf b}_i$ is a rational multiple of ${\bf b}_j$ and vice versa. The connected components of the graph $G_A$ are called {\em bouquets}.

It follows from the definition that the free vectors of $A$ form one bouquet, which we call the {\em free bouquet} of $G_A$.  The non-free bouquets are of two types: {\em mixed} and {\em non-mixed}. A non-free bouquet is mixed if contains an edge $\{\ab_i,\ab_j\}$ such that ${\bf b}_i=\lambda {\bf b}_j$ for some $\lambda<0$, and is non-mixed if it is either an isolated vertex or for all of its edges $\{\ab_i,\ab_j\}$ we have ${\bf b}_i=\lambda {\bf b}_j$ with $\lambda>0$, see \cite[Lemma 1.2]{PTV}.

We reorder the vectors $\ab_1,\dots,\ab_n$ to $\ab_{11},\ab_{12}, \dots,\ab_{1k_1},\ \ab_{21},\ab_{22}, \dots,\ab_{2k_2},\ \dots,\\ \ab_{s1},\ab_{s2}, \dots,\ab_{sk_s}$ in such a way the first $k_1$ vectors belong to the bouquet $B_1$, the next $k_2$ to $B_2$ and so on up to the last $k_s$ that belong to the bouquet $B_s$. Note that $k_1+k_2+\dots +k_s=n$.
For each bouquet $B_i$ we define two vectors $\cb_{B_i}$ and $\ab_{B_i}$ which record the bouquet's  types and linear dependencies of the bouquet vectors. If the bouquet $B_i$ is free then we set $\cb_{B_i}\in\ZZ^n$ to be any nonzero vector such that $\supp(\cb_{B_i})=\{i1,\dots , ik_i\}$ and with the property that the first nonzero coordinate, $c_{i1}$, is positive. For a non-free bouquet $B_i$ of $A$, consider  the Gale transforms of the elements in $B_i$. All the Gale transforms are nonzero, since the bouquet is not free, and pairwise linearly dependent, since they belong to the same bouquet. Therefore, there exists a nonzero coordinate $l$ in all of them. Let $g_l=\gcd(({\bf b}_{i1})_l, ({\bf b}_{i2})_l),\dots , ({\bf b}_{ik_i})_l)$, where $({\bf w})_l$ is the $l$-th component of a vector ${\bf w}$. Then ${\bf c}_{B_i}$  is the vector in $\ZZ^n$ whose $qj$-th component  is $0$ if $q\not= i$, and   $c_{ij}=\varepsilon_{i1}({\bf b}_{ij})_l/g_l$, where $\varepsilon_{i1}$ represents the sign of the integer $({\bf b}_{i1})_l$. Note that $c_{i1}$ is always positive. Then the vector $\ab_{B_i}$ (see \cite[Definition 1.7]{PTV}), is defined as $\ab_{B_i}=\sum_{j=1}^n (c_{B_i})_j\ab_j\in\ZZ^m$.

If $B_i$ is a  non-free bouquet of $A$, then $B_i$ is a mixed bouquet if and only if the vector ${\bf c}_{B_i}$ has a negative and a positive coordinate, see \cite[Lemma 1.6]{PTV}, and  $B_i$ is non-mixed if and only if the vector $\cb_{B_i}$ has all nonzero coordinates positive.
Let $A_B$  be the matrix with columns the vectors $\ab_{B_i}$, $1\leq i\leq s$ then the toric ideal $I_{A_B}$ is the bouquet ideal  of $A$.

Let ${\mathbf u}=(u_1, u_2, \ldots, u_s)\in  \Ker_{\ZZ}({A_B})$
then the linear map $$D({\mathbf u})=(c_{11}u_1, c_{12}u_1,\ldots ,c_{1k_1}u_1, c_{21}u_2, \ldots ,c_{2k_2}u_2, \ldots ,c_{s1}u_s, \ldots ,c_{sk_s}u_s),   $$
where all $c_{j1}, 1 \leq j \leq s$, are positive, is an isomorphism from $ \Ker_{\ZZ}({A_B})$ to $ \Ker_{\ZZ}(A)$, see \cite[Theorem 1.9]{PTV}.
Therefore, since the matrices $A$ and $A_B$ have their kernels isomorphic, their toric ideals $I_A$ and $I_{A_B}$ have the same codimension.

 Note that the bouquet ideal is simple: a toric ideal is called \textit{simple} if every bouquet is a singleton, in other words if $I_T\subset K[x_1,\dots , x_s]$ and has $s$ bouquets. The bouquet ideal of a simple toric ideal $I_A$ is $I_A$. All toric
ideals $I_A\subset K[x_1,\dots , x_n]$ of height one, the ideals of monomial curves,  are simple if $n\geq 3$. There are no simple toric ideals in codimension one, except the zero ideal in the polynomial ring in one variable.

 \begin{Definition}
 Let $I_T\subset K[x_1,\dots , x_s]$ be a simple toric ideal and $\omega \subset \{1, \dots , s\}$. A toric ideal $I_A$
 is called $T_{\omega}$-robust ideal if and only if
 \begin{itemize}
  \item the bouquet ideal of $I_A$ is $I_T$ and
  \item $\omega =\{i\in [s]| B_i \ \text{is non mixed}\}.$
 \end{itemize}
 \end{Definition}

The $T_{\emptyset }$-robust toric ideals are always strongly robust ideals, see \cite[Section 4]{PTV}, while the $T_{[s]}$-robust toric ideals are the
stable ideals, see \cite[Section 3]{PTV}. Every toric ideal $I_A$ is a $T_{\omega}$-robust ideal where $I_T$ is the bouquet ideal of $I_A$
and $\omega$ is the set of indices corresponding to the non mixed bouquets of $I_A$. The complement of $\omega$ in $[s]$ is the set of indices corresponding to the free and the mixed bouquets. The ideal $I_A$ has a free bouquet if
and only if the bouquet ideal $I_T$ has a free
bouquet, consisting of one free vector since $I_T$ is simple. Thus the combination of the
information given by $\omega$ and $T$ provides
the signatures of all the bouquets of a $T_{\omega}$-robust toric ideal.
The results of this section can be summarized:  the cardinality of the sets of different toric bases depend
only on the signatures of the bouquets and the bouquet ideal. Remark that in \cite[Theorem 1.11]{PTV} it was proved that the cardinality of the set of circuits and of the Graver basis, respectively, depends only on the bouquet ideal. Also in the case of any two $T_{[s] }$-robust toric ideals there is a one-to-one  correspondence between all different toric bases, \cite[Theorem 3.7]{PTV}.
The aim of this section is to generalize these
results for all toric ideals with the same signatures and the same bouquet ideal, by proving that in these cases there exists also a one-to-one correspondence between the indispensable elements, minimal Markov bases, the universal Markov bases (see Theorem \ref{markov}), reduced Gr{\"o}bner bases and the universal   Gr{\"o}bner bases (see Theorem \ref{Universal}).

A \textit{Markov basis} of $A$ is a finite subset $\mathcal{M}$ of $\Ker_{\mathbb{Z}}(A)$ such that whenever $\textbf{w}, \textbf{v} \in \mathbb{N}^n$
and $\textbf{w}- \textbf{v} \in \Ker_{\mathbb{Z}}(A)$ there exists a subset $\{\textbf{u}_i : i = 1, \ldots , r\}$ of $\mathcal{M}$ that \textit{connects} $\textbf{w}$
to $\textbf{v}$. Here, connectedness means that $\textbf{w}- \textbf{v} = \sum_{i=1}^{r} \textbf{u}_i$, and $(\textbf{w}-\sum_{i=1}^{p} \textbf{u}_i) \in \mathbb{N}^n$ for all $1 \leq p \leq r$ and $\textbf{u}_i \in \mathcal{M}$ means that  $\textbf{u}_i \in \mathcal{M}$ or the opposite  $-\textbf{u}_i \in \mathcal{M}$. (We will also express that $\textbf{w}\in\mathbb{N}^n$ by writing $\textbf{w}\geq \textbf{0}$, that is, each component $w_i$ of $\wb$ is non-negative.)  A Markov basis $\mathcal{M}$ is minimal if no proper subset of $\mathcal{M}$ is a Markov basis.
The universal Markov basis $\mathcal{M}(A)$ of $A$ is the union of all minimal Markov bases of $A$, up to identification of opposite vectors, and the set of indispensable elements $S(A)$ is the intersection of all different minimal Markov bases, via the same identification. By a classical result of Diaconis and Sturmfels
\cite{DS} minimal Markov bases of $A$ correspond to minimal system of generators for the toric ideal $I_A$: $\mathcal{M}$ is minimal Markov basis  of $A$ if and only if $\{\textbf{x}^{u^+}-\textbf{x}^{u^-}|\textbf{u}\in \mathcal{M} \} $ is a minimal system of binomial generators for the toric ideal $I_A$. The set of indispensable binomials  $S(I_A)$ of the toric ideal is the set  $\{\textbf{x}^{u^+}-\textbf{x}^{u^-}|\textbf{u}\in S(A) \}$, that is the intersection  of all minimal systems of binomial generators of $I_A$, up to identification of opposite binomials.

For the proofs of the next two theorems we need the following notation. Let $ B_{1}^{(j)}, \ldots, B_{s}^{(j)}$ be the bouquets of the toric ideal $I_{A_j}$  with $\cb_{B_1^{(j)}},\ldots, \cb_{B_s^{(j)}}$ the corresponding vectors for $j=1,2$. Furthermore, we denote by $k_i$ and $l_i$, respectively, the number of elements of the $i$-th bouquet of $B_i^{(1)}$ and $B_i^{(2)}$, respectively, and by $c^{(1)}_{i1},\ldots,c^{(1)}_{ik_i}$ and $c^{(2)}_{i1},\ldots,c^{(2)}_{il_i}$ the nonzero coordinates of $\cb_{B_i^{(1)}}$ and $\cb_{B_i^{(2)}}$, respectively. Furthermore, associated to the matrices $A_1$ and $A_2$ we consider the toric ideals $I_{A_1}\subset K[x_{11},\ldots,x_{1k_1},\ldots, x_{s1},\ldots,x_{sk_s}]$ and $I_{A_2}\subset K[y_{11},\ldots,y_{1l_1},\ldots, y_{s1},\ldots,y_{sl_s}]$.  According to \cite[Theorem 1.9]{PTV}, for each $j=1,2$ there is a one-to-one correspondence between the elements of $\Ker_{\mathbb{Z}} (T)$ and the elements of $\Ker_{\mathbb{Z}} (A_j)$  given by the map $D_j: \textbf{u} \mapsto D_j(\textbf{u})$, which sends the element $\textbf{u} = (u_1, \ldots, u_s) \in  \Ker_\mathbb{Z}(T)$  to
$D_j(\textbf{u}):= \cb_{B_{1}^{(j)}}u_1 + \cdots + \cb_{B_{s}^{(j)}}u_s\in\Ker_{\ZZ}(A_j)$. Therefore, all elements of $\Ker_{\mathbb{Z}} (A_j)$ are in the form $D_j(\textbf{u})$ for a unique $\textbf{u}\in \Ker_{\mathbb{Z}}(T)$.

\begin{Theorem}\label{markov}
  Let $I_{A_1}, I_{A_2}$ be two $T_{\omega}$-robust toric ideals. If $\mathcal{M}_1$ is a minimal Markov basis of $A_1$ then
$\mathcal{M}_2=\{D_2( \textbf{u}) | \  \ub\in\Ker_{\ZZ}(T) \text{ such that } D_1( \textbf{u}) \in \mathcal{M}_1\}$ is a minimal Markov basis of $A_2$.
  Thus there exists a one-to-one correspondence between
  the sets of all different minimal Markov bases of the two ideals. In particular, there exists a one-to-one correspondence between the indispensable elements  and the universal Markov bases of ${A_1}$ and ${A_2}$, respectively.
\end{Theorem}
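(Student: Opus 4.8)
The plan is to transfer minimal Markov bases through the $\ZZ$-linear bijection $\phi:=D_2\circ D_1^{-1}\colon\Ker_\ZZ(A_1)\to\Ker_\ZZ(A_2)$, which is well defined by \cite[Theorem 1.9]{PTV}. First note $\mathcal M_2=\phi(\mathcal M_1)$: since $D_1$ maps $\Ker_\ZZ(T)$ bijectively onto $\Ker_\ZZ(A_1)$, every $\textbf{m}\in\mathcal M_1$ is $D_1(\ub)$ for a unique $\ub\in\Ker_\ZZ(T)$ and $D_2(\ub)=\phi(\textbf{m})$. Everything will follow from the symmetric assertion that \emph{$\phi$ maps any Markov basis of $A_1$ onto a Markov basis of $A_2$}. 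Granting this and applying it with $A_1$ and $A_2$ interchanged (being $T_\omega$-robust is symmetric, and $\phi^{-1}=D_1\circ D_2^{-1}$ has the same form), $\phi$ induces a bijection between the Markov bases of the two ideals; if $\mathcal M_1$ were minimal but $\phi(\mathcal M_1)$ not, a Markov basis $\mathcal N\subsetneq\phi(\mathcal M_1)$ of $A_2$ would yield a Markov basis $\phi^{-1}(\mathcal N)\subsetneq\mathcal M_1$ of $A_1$, a contradiction, so $\phi$ restricts to a bijection between the minimal Markov bases of $A_1$ and $A_2$, compatible with the identification of opposite vectors since $\phi(-\ub)=-\phi(\ub)$. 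Finally $S(A_j)$ and $\mathcal M(A_j)$ are, respectively, the intersection and the union of all minimal Markov bases of $A_j$, and an injective map commutes with both, so $\phi$ restricts further to bijections $S(A_1)\to S(A_2)$ and $\mathcal M(A_1)\to\mathcal M(A_2)$.

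For the core assertion, let $\mathcal M$ be a Markov basis of $A_1$ and take $\textbf{w},\textbf{v}\geq\textbf{0}$ with $\textbf{w}-\textbf{v}\in\Ker_\ZZ(A_2)$; we must connect $\textbf{w}$ to $\textbf{v}$ using $\pm\phi(\mathcal M)$. Replacing $\textbf{w},\textbf{v}$ by $\textbf{w}-(\textbf{w}\wedge\textbf{v})$ and $\textbf{v}-(\textbf{w}\wedge\textbf{v})$, where $\textbf{w}\wedge\textbf{v}$ is the componentwise minimum (adding $\textbf{w}\wedge\textbf{v}$ back to a connecting walk for the difference connects $\textbf{w}$ to $\textbf{v}$), we may assume $\textbf{w}\wedge\textbf{v}=\textbf{0}$; then, with $\gb:=\textbf{w}-\textbf{v}$, a one-line check shows $\textbf{w}=\gb^+$ and $\textbf{v}=\gb^-$. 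Set $\bar\gb:=D_2^{-1}(\gb)\in\Ker_\ZZ(T)$, with coordinates $\bar g_1,\dots,\bar g_s$, and $\hat\gb:=D_1(\bar\gb)\in\Ker_\ZZ(A_1)$, so that $\gb=\phi(\hat\gb)$; write $\gb^{(1)}:=\hat\gb$, $\gb^{(2)}:=\gb$, so that $\gb^{(j)}=D_j(\bar\gb)$. By \cite[Theorem 1.9]{PTV} the fiber $F_j:=\{\zb\geq\textbf{0}:A_j\zb=A_j(\gb^{(j)})^+\}$ is parametrized by $\eta\mapsto(\gb^{(j)})^++D_j(\eta)$ over $E_j:=\{\eta\in\Ker_\ZZ(T):(\gb^{(j)})^++D_j(\eta)\geq\textbf{0}\}$. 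By the formula for $D_j$, the coordinate of $(\gb^{(j)})^++D_j(\eta)$ at the $\ell$-th variable of the $i$-th bouquet equals $(c^{(j)}_{i\ell}\bar g_i)^++c^{(j)}_{i\ell}\eta_i$, and imposes (dividing by the nonzero $c^{(j)}_{i\ell}$) the constraint $\eta_i\geq-(\bar g_i)^+$ if $c^{(j)}_{i\ell}>0$ and $\eta_i\leq(\bar g_i)^-$ if $c^{(j)}_{i\ell}<0$ --- depending on $c^{(j)}_{i\ell}$ only through its sign, and not at all on $j$. Since $c^{(j)}_{i1}>0$ always, since a non-free bouquet has a negative-sign coefficient precisely when it is mixed, and since $\bar g_i=0$ for free $i$, we obtain $E_j=\{\eta\in\Ker_\ZZ(T):-(\bar g_i)^+\leq\eta_i\ \text{for all }i\ \text{and}\ \eta_i\leq(\bar g_i)^-\ \text{for all mixed }i\}$, which depends only on $\bar\gb$ and on $\omega$. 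As $I_{A_1}$ and $I_{A_2}$ are both $T_\omega$-robust, $E_1=E_2=:E$.

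Now use the parametrizations $F_1\cong E\cong F_2$. A move by $\pm\textbf{m}$ with $\textbf{m}\in\mathcal M$ corresponds to shifting $\eta$ by $\pm D_1^{-1}(\textbf{m})$, and the matching move by $\pm\phi(\textbf{m})=\pm D_2(D_1^{-1}(\textbf{m}))$ corresponds to the same shift, while staying in $F_1$, respectively $F_2$, means in both cases staying in $E$. Moreover $(\hat\gb)^+$ and $\gb^+=\textbf{w}$ both correspond to $\eta=\textbf{0}$, and $(\hat\gb)^-=(\hat\gb)^+-D_1(\bar\gb)$ and $\gb^-=\textbf{v}=\gb^+-D_2(\bar\gb)$ both correspond to $\eta=-\bar\gb$. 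Since $\mathcal M$ is a Markov basis of $A_1$ it connects $(\hat\gb)^+$ to $(\hat\gb)^-$ inside $F_1$; transporting that walk through $E$ produces a walk inside $F_2$ from $\textbf{w}$ to $\textbf{v}$ using only $\pm\phi(\mathcal M)$. This proves the core assertion, hence the theorem.

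The one step I expect to require genuine care is the identification $E_1=E_2$: even though $A_1$ and $A_2$ generally have different numbers $k_i,l_i$ of variables in the $i$-th bouquet and completely different coefficient vectors $\cb_{B_i^{(1)}},\cb_{B_i^{(2)}}$, the non-negativity constraints cutting out the two fibers must be shown to collapse to one and the same condition, depending only on $\bar\gb$ and on which bouquets are mixed. This is exactly the point where the shared signature $\omega$ is used, and it is the structural reason why the cardinalities --- rather than the explicit bases themselves --- carry over from $A_1$ to $A_2$.
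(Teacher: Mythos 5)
Your proof is correct and turns on the same key observation as the paper's — that the non-negativity constraints cutting out the fiber over $(\gb^{(j)})^+$ depend only on the \emph{signs} of the coefficients $c^{(j)}_{i\ell}$, hence only on the shared signature $\omega$ — but you package it more structurally. The paper establishes this by a five-case analysis (on whether $i\in\omega$ and on the sign of $v_i$), transferring each inequality $D_1(\textbf{v})^+-\sum_{j\leq p}D_1(\textbf{u}_j)\geq\textbf{0}$ to the corresponding one for $D_2$; you instead identify both fibers $F_1$ and $F_2$ with a single set $E\subset\Ker_\ZZ(T)$ by computing the constraints once, and then transport the entire connecting walk through $E$. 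This fiber-isomorphism framing is cleaner, makes the role of $\omega$ explicit, and spells out the standard reduction to $\textbf{w}\wedge\textbf{v}=\textbf{0}$ that the paper uses silently; the minimality-via-symmetry argument at the end is essentially the paper's. One small imprecision: for a free bouquet the vector $\cb_{B_i}$ may have coefficients of either sign (only $c_{i1}>0$ is guaranteed), so a constraint of the form $\eta_i\leq(\bar g_i)^-$ can a priori also arise for free $i$ and your displayed description of $E_j$ omits it; however, since $\bar g_i=\eta_i=0$ whenever $i$ is free and $\eta\in\Ker_\ZZ(T)$, any such constraint is vacuous, so the identification $E_1=E_2$ — and therefore the proof — is unaffected.
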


\begin{proof}

Let $\mathcal{M}_1$ be a minimal Markov basis of $A_1$.   First we want to prove that
$\mathcal{M}_2=\{D_2( \textbf{u}) | \  \ub\in\Ker_{\ZZ}(T) \text{ such that } D_1( \textbf{u}) \in \mathcal{M}_1\}$ is a Markov basis of $A_2$. It is enough to prove that one can connect $D_2(\textbf{v})^+$ to $D_2(\textbf{v})^-$ for every $\textbf{v} \in \Ker_\mathbb{Z}(T)$.

Since $\mathcal{M}_1$ is a Markov basis of  $A_1$
 there exists a subset $\{ D_1(\textbf{u}_j) | \ j=1, \ldots r \}$ of $\mathcal{M}_1$ such that for the element $D_1(\textbf{v} ) \in \Ker_\mathbb{Z}(A_1)$ we have
$D_1(\textbf{v} )^{+} - D_1(\textbf{v})^{-} = \sum_{j=1}^{r} D_1(\bf{u}_j)$ and
$D_1(\textbf{v})^{+} - \sum_{j=1}^{p} D_1(\textbf{u}_j) \geq \textbf{0}$ for all $1 \leq p \leq r$.
Therefore,  we have that
$$ \left(\cb_{B_{i}^{(1)}}v_i \right)^+ - \left( \cb_{B_{i}^{(1)}} u_{1i} + \cdots + \cb_{B_{i}^{(1)}} u_{pi} \right)\geq \textbf{0} \ \ \ \ \forall \ 1\leq p\leq r, \ \forall \ 1\leq i\leq s, $$
which we can further write at coordinate level as
$$ \left( \left(c_{i1}^{(1)}, c_{i2}^{(1)}, \ldots, c_{ik_i}^{(1)} \right) v_i\right )^+ \geq \left(c_{i1}^{(1)}, c_{i2}^{(1)}, \ldots, c_{ik_i}^{(1)} \right) u_{1i} + \cdots + \left(c_{i1}^{(1)}, c_{i2}^{(1)}, \ldots, c_{ik_i}^{(1)} \right) u_{pi},$$
for all $1\leq p\leq r$ and for all $1\leq i\leq s$.

\underline{Case 1:} Let $i \in \omega, v_i >0$. In this case the bouquet $B_{i}^{(1)}$ is non-mixed and all $c_{ij}^{(1)}>0$. We have for all $1\leq p\leq r$ and for all $1\leq j\leq k_i$
$$c^{(1)}_{ij} v_i  \geq  c^{(1)}_{ij}  \left( u_{1i} + \cdots + u_{pi} \right)$$
which gives
$$
v_i  \geq   u_{1i} + \cdots + u_{pi} .$$
Since the toric ideal  $I_{A_2}$ is $T_{\omega}$-robust, $B_{i}^{(2)}$ is also non-mixed and all $c_{ij}^{(2)}>0$. Therefore, the last set of inequalities becomes by multiplication with $c_{ij}^{(2)}$
$$c^{(2)}_{ij} v_i  \geq  c^{(2)}_{ij}  ( u_{1i} + \cdots + u_{pi} )\ \ \ \ \ \forall \ 1\leq j\leq l_i ,$$
which implies
$$ \left( \left(c_{i1}^{(2)}, c_{i2}^{(2)}, \ldots, c_{il_i}^{(2)} \right) v_i \right)^+ \geq \left(c_{i1}^{(2)}, c_{i2}^{(2)}, \ldots, c_{il_i}^{(2)} \right) u_{1i} + \cdots + \left(c_{i1}^{(2)}, c_{i2}^{(2)}, \ldots, c_{il_i}^{(2)} \right) u_{pi},$$
for all $1\leq p\leq r$.

\underline{Case 2:} Let $i\in \omega, v_i \leq 0$. Here again the bouquet $B_{i}^{(1)}$ is non-mixed and $c_{ij}^{(1)}>0$ for all $j=1,2, \ldots, k_i$. We have
$$ 0  \geq  c^{(1)}_{ij}  ( u_{1i} + \cdots + u_{pi} )$$
which gives
$$ 0  \geq   u_{1i} + \cdots + u_{pi} .$$
Again the toric ideal  $I_{A_2}$ is $T_{\omega}$-robust, $B_{i}^{(2)}$ is also non-mixed and all $c_{ij}^{(2)}>0$, $j=1,2, \ldots, l_i$. Therefore, the last set of inequalities becomes by multiplication with $c_{ij}^{(2)}$
$$ 0  \geq  c_{ij}^{(2)} ( u_{1i} + \cdots + u_{pi} ) \ \ \ \ \forall \ 1\leq j\leq l_i,$$
which implies
$$\left( \left(c_{i1}^{(2)}, c_{i2}^{(2)}, \ldots, c_{il_i}^{(2)} \right) v_i\right)^+ \geq \left(c_{i1}^{(2)}, c_{i2}^{(2)}, \ldots, c_{il_i}^{(2)} \right) u_{1i} + \cdots + \left(c_{i1}^{(2)}, c_{i2}^{(2)}, \ldots, c_{il_i}^{(2)} \right) u_{pi},$$
for all $1\leq p\leq r$.

\underline{Case 3:} Let $i\notin \omega$ and the bouquet $B_{i}^{(1)}$ is  the free bouquet of $I_{A_1}$, if it has a free bouquet. Since $\textbf{v}  \in  \Ker_\mathbb{Z}(T)$ and  $B_{i}^{(1)}$
is the free bouquet then $v_i$ is zero. Since this is true for all elements of  $\Ker_\mathbb{Z}(T)$
it means $I_{A_2}$ has a free bouquet, the $B_{i}^{(2)}$, and all
$v_i=u_{1i}=\dots=u_{pi}=0.$
Thus
$$ \left( \left(c_{i1}^{(2)}, c_{i2}^{(2)}, \ldots, c_{il_i}^{(2)} \right) v_i \right)^+ = \left(c_{i1}^{(2)}, c_{i2}^{(2)}, \ldots, c_{il_i}^{(2)} \right) u_{1i} + \cdots + \left(c_{i1}^{(2)}, c_{i2}^{(2)}, \ldots, c_{il_i}^{(2)} \right) u_{pi}={\bf 0}.$$

\underline{Case 4:} Let $i\notin \omega, v_i >0 $ and the bouquet $B_{i}^{(1)}$ is mixed, namely the vector $\cb_{B_{i}^{(1)}}$ has both positive and negative coordinates and splits as $\cb_{B_{i}^{(1)}}= \left(\cb_{B_{i}^{(1)}} \right)^+ - \left(\cb_{B_{i}^{(1)}} \right)^-$. We have
$$\left(\cb_{B_{i}^{(1)}} \right)^+ v_i  \geq  \left(\cb_{B_{i}^{(1)}} \right)^+  \left( u_{1i} + \cdots + u_{pi} \right)$$
and
$$ {\bf 0}  \geq  - (\cb_{B_{i}^{(1)}})^- (u_{1i} + \cdots + u_{pi}).$$
Combined together they give
$ v_i  \geq   u_{1i} + \cdots + u_{pi} \geq 0.$
Since the toric ideals $I_{A_1}$ and $I_{A_2}$ are $T_{\omega}$-robust, whenever $B_{i}^{(1)}$ is mixed, $B_{i}^{(2)}$ is also mixed. In this case $\cb_{B_{i}^{(2)}}$ has both positive and negative coordinates and splits as $\cb_{B_{i}^{(2)}}= \left(\cb_{B_{i}^{(2)}} \right)^+ - \left(\cb_{B_{i}^{(2)}} \right)^-$.
We have
$$\left(\cb_{B_{i}^{(2)}} \right)^+ v_i  \geq  \left(\cb_{B_{i}^{(2)}} \right)^+  ( u_{1i} + \cdots + u_{pi} )$$
and
$$ {\bf 0} \geq  - \left(\cb_{B_{i}^{(2)}} \right)^- (u_{1i} + \cdots + u_{pi})$$ which imply
$$ \left( \left(c_{i1}^{(2)}, c_{i2}^{(2)}, \ldots, c_{il_i}^{(2)} \right) v_i \right)^+ \geq \left(c_{i1}^{(2)}, c_{i2}^{(2)}, \ldots, c_{il_i}^{(2)} \right) u_{1i} + \cdots + \left(c_{i1}^{(2)}, c_{i2}^{(2)}, \ldots, c_{il_i}^{(2)} \right) u_{pi},$$
for all $1\leq p\leq r$.

\underline{Case 5:} Let $i\notin \omega, v_i \leq 0 $ and the bouquet $B_{i}^{(1)}$ is  mixed, namely the vector $\cb_{B_{i}^{(1)}}$ has both positive and negative coordinates and splits as $\cb_{B_{i}^{(1)}}= \left(\cb_{B_{i}^{(1)}} \right)^+ - \left(\cb_{B_{i}^{(1)}} \right)^-$.
We have
$$-\left(\cb_{B_{i}^{(1)}} \right)^- v_i  \geq  -\left(\cb_{B_{i}^{(1)}}\right)^-  ( u_{1i} + \cdots + u_{pi} )$$
and
$$ \textbf{0}  \geq   \left(\cb_{B_{i}^{(1)}}\right)^+ (u_{1i} + \cdots + u_{pi}).$$
Combined together they give
$ v_i  \leq   u_{1i} + \cdots + u_{pi} \leq 0.$
By multiplying appropriately the above inequalities by either $\left(\cb_{B_{i}^{(2)}} \right)^+$ or $\left(\cb_{B_{i}^{(2)}} \right)^-$ we have
$$-\left(\cb_{B_{i}^{(2)}} \right)^- v_i  \geq  -\left(\cb_{B_{i}^{(2)}} \right)^-  ( u_{1i} + \cdots + u_{pi} )$$
and
$$ \textbf{0}  \geq   \left(\cb_{B_{i}^{(2)}} \right)^+ (u_{1i} + \cdots + u_{pi})$$
which imply
$$ \left( \left(c_{i1}^{(2)}, c_{i2}^{(2)}, \ldots, c_{il_i}^{(2)} \right) v_i \right)^+ \geq \left(c_{i1}^{(2)}, c_{i2}^{(2)}, \ldots, c_{il_i}^{(2)} \right) u_{1i} + \cdots + \left(c_{i1}^{(2)}, c_{i2}^{(2)}, \ldots, c_{il_i}^{(2)} \right) u_{pi},$$
for all $1\leq p\leq r$.

Finally combining all five cases
we arrive at
$$\left( D_2(\textbf{v})^{+} - \sum_{j=1}^{p} D_2(\textbf{u}_j) \right) \geq 0 \text{ for all }1 \leq p \leq r .$$ Also, we have that
$D_1(\textbf{v})^{+} - D_1(\textbf{v})^{-} = D_1(\textbf{v}) = \sum_{j=1}^{r} D_1(\textbf{u}_j) \Rightarrow D_2(\textbf{v} )^{+} - D_2(\textbf{v})^{-} = D_2(\textbf{v}) = \sum_{j=1}^{r} D_2(\bf{u}_j),$ since both $D_1, D_2$ are linear maps.

Therefore, this concludes that for any element $D_2(\textbf{v} ) \in \Ker_\mathbb{Z}(A_2)$, there exists a subset $\{ D_2(\textbf{u}_j) | \ j=1, \ldots r \}$ of $\mathcal{M}_2$ that connects $D_2(\textbf{v} )^{+}$ and $D_2(\textbf{v})^{-}$.

This one-to-one correspondence between Markov bases of $A_1$ and $A_2$, ensures also that the map $f: D_1(\textbf{u}) \mapsto D_2(\bf{u})$ preserves the minimality of the Markov bases. Indeed, if $\mathcal{M}_1$ is a minimal Markov basis of $A_1$
then, as we saw, $\mathcal{M}_2 =\{ D_2(\textbf{u}) | \ \ub\in\Ker_{\ZZ}(T) \text{ such that } D_1(\bf{u}) \in \mathcal{M}_1 \}$  is a Markov basis of $A_2$, and if it were not minimal then a proper subset $\mathcal{M}_2^{'}$ of $\mathcal{M}_2$ would be a minimal Markov basis, and thus by repeating the previous argument we would obtain that a
proper subset $\mathcal{M}_1^{'}=\{D_1({\bf u})| \ \ub\in\Ker_{\ZZ}(T) \text{ such that } D_2({\bf u})\in \mathcal{M}_2^{'}\}$ of the minimal Markov basis $\mathcal{M}_1$ would be a Markov basis of $A_1$, a contradiction.

The set of indispensable elements is the intersection of all different minimal Markov bases and the universal Markov basis is the union of all minimal Markov bases. Therefore it  follows that  $f: D_1(\textbf{u}) \mapsto D_2(\bf{u})$ is a one-to-one correspondence between the two sets of the indispensable elements and the same is true for the universal Markov bases of $A_1$ and $A_2$. \qed

\end{proof}

\begin{Theorem}\label{Universal}
  Let $I_{A_1}, I_{A_2}$ be two $T_{\omega}$-robust toric ideals.
  If $\mathcal{G}$ is a reduced Gr{\"o}bner basis  of $I_{A_1}$ with respect to  a monomial order $<_1$ then there exists a monomial order $<_2$ such that
$\mathcal{G}'=\{y^{D_2( \textbf{u})^+}-y^{D_2( \textbf{u})^-} | \ \ub\in\Ker_{\ZZ}(T) \text{ s.t. } \ x^{D_1( \textbf{u})^+}-x^{D_1( \textbf{u})^-} \in \mathcal{G}\}$ is a reduced Gr{\"o}bner basis  of $I_{A_2}$.  Furthermore, if $$\ini_{<_1}(I_{A_1})=(x^{D_1({\bf u_1})^+}, \ldots, x^{D_1({\bf u_r})^+})$$
then $$\ini_{<_2}(I_{A_2})=(y^{D_2({\bf u_1})^+}, \ldots, y^{D_2({\bf u_r})^+}),$$  for some ${\bf u}_1, \ldots ,{\bf u}_r\in \Ker_{\ZZ}(T)$.
Thus there exists   a one-to-one correspondence between the  reduced Gr{\"o}bner bases of the two ideals. In particular, there is a one-to-one correspondence between the universal Gr{\"o}bner bases of $I_{A_1}$ and $I_{A_2}$.
\end{Theorem}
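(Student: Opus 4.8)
The plan is to realize the monomial order $<_1$ by a weight vector, transport that weight vector through the isomorphisms $D_1$ and $D_2$, and then check that the two reduced Gr\"obner bases are encoded by the same data on $\Ker_{\ZZ}(T)$. First, by a standard argument (see \cite[Chapter 1]{St}) I would choose an integral weight vector ${\bf w}^{(1)}$ with $\ini_{{\bf w}^{(1)}}(I_{A_1})=\ini_{<_1}(I_{A_1})$. Since this initial ideal is monomial, ${\bf w}^{(1)}$ is automatically generic, meaning that $\langle {\bf w}^{(1)},{\bf z}\rangle\neq 0$ for every nonzero ${\bf z}\in\Ker_{\ZZ}(A_1)$, and the reduced Gr\"obner basis of $I_{A_1}$ with respect to any monomial order refining ${\bf w}^{(1)}$ equals $\mathcal{G}$; in particular the leading term of $x^{D_1({\bf u})^+}-x^{D_1({\bf u})^-}$ is $x^{D_1({\bf u})^+}$ exactly when $\langle {\bf w}^{(1)},D_1({\bf u})\rangle>0$.

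Next, since each $\cb_{B_i^{(1)}}$ is supported on the $i$-th block of coordinates, for ${\bf u}=(u_1,\dots,u_s)\in\Ker_{\ZZ}(T)$ one has $\langle {\bf w}^{(1)},D_1({\bf u})\rangle=\sum_{i=1}^{s}u_i\langle {\bf w}^{(1)},\cb_{B_i^{(1)}}\rangle=\langle\widetilde{{\bf w}},{\bf u}\rangle$, where $\widetilde{w}_i:=\sum_{j=1}^{k_i}w^{(1)}_{ij}c^{(1)}_{ij}$, and because $D_1$ is an isomorphism onto $\Ker_{\ZZ}(A_1)$ the genericity of ${\bf w}^{(1)}$ says exactly that $\langle\widetilde{{\bf w}},{\bf u}\rangle\neq 0$ for all nonzero ${\bf u}\in\Ker_{\ZZ}(T)$. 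I would then define ${\bf w}^{(2)}$ by $w^{(2)}_{i1}:=\widetilde{w}_i/c^{(2)}_{i1}$ and $w^{(2)}_{ij}:=0$ for $j\geq 2$ — legitimate because $c^{(2)}_{i1}>0$ — so that $\langle {\bf w}^{(2)},D_2({\bf u})\rangle=\langle\widetilde{{\bf w}},{\bf u}\rangle$ for every ${\bf u}\in\Ker_{\ZZ}(T)$, and hence ${\bf w}^{(2)}$ is generic for $I_{A_2}$. After adding a sufficiently large positive multiple of a strictly positive vector in the row span of $A_2$ — which changes neither $\langle {\bf w}^{(2)},\cdot\rangle$ on $\Ker_{\ZZ}(A_2)$ nor the initial ideal nor the reduced Gr\"obner basis — and clearing denominators, I may assume ${\bf w}^{(2)}$ is a positive integral weight, and I take $<_2$ to be any monomial order refining it. By construction $\sgn\langle {\bf w}^{(2)},D_2({\bf u})\rangle=\sgn\langle {\bf w}^{(1)},D_1({\bf u})\rangle$ for all ${\bf u}\in\Ker_{\ZZ}(T)$, so the leading-term patterns of the corresponding binomials coincide.

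It then remains to check that $\mathcal{G}'$ really is the reduced Gr\"obner basis of $I_{A_2}$ with respect to $<_2$. For a toric ideal $I_A$ and a generic weight, the reduced Gr\"obner basis consists of the binomials $x^{{\bf z}^+}-x^{{\bf z}^-}$ with ${\bf z}\in\Ker_{\ZZ}(A)$ for which $x^{{\bf z}^+}$ is a minimal generator of the initial ideal $\ini(I_A)$ and $x^{{\bf z}^-}$ is standard (see \cite[Chapter 4]{St}), each such ${\bf z}$ being determined by ${\bf z}^+$; and both the minimal-generator condition and standardness are expressed solely through the sign function above and through componentwise inequalities of the form $D_j({\bf u}')^{\epsilon}\leq D_j({\bf u})^{\delta}$ with $\epsilon,\delta\in\{+,-\}$. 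So the heart of the matter is to prove that
$$D_1({\bf u}')^{\epsilon}\leq D_1({\bf u})^{\delta}\iff D_2({\bf u}')^{\epsilon}\leq D_2({\bf u})^{\delta}$$
for all ${\bf u},{\bf u}'\in\Ker_{\ZZ}(T)$ and all $\epsilon,\delta\in\{+,-\}$. Since $D_j({\bf u})^{\pm}=\sum_{i=1}^{s}(\cb_{B_i^{(j)}}u_i)^{\pm}$ is a sum over the blocks with pairwise disjoint supports, this reduces to the analogous statement inside each block, which I would settle by a short case analysis on the type of the bouquet $B_i$: if $i\in\omega$, all coordinates of $\cb_{B_i^{(1)}}$ and of $\cb_{B_i^{(2)}}$ are positive; if $B_i$ is the free bouquet, both sides contribute $0$; and if $B_i$ is mixed, $\cb_{B_i^{(1)}}$ and $\cb_{B_i^{(2)}}$ each split into a positive and a negative part supported on complementary coordinate sets. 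In every case the block comparison is equivalent to a condition on the signs of $u_i,u_i'$ and on $|u_i|$ versus $|u_i'|$ which does not involve the actual values $c^{(j)}_{ij}$, and the type of $B_i$ is common to $A_1$ and $A_2$ precisely because both ideals are $T_{\omega}$-robust. Combining this with the equality of sign functions forces $\{{\bf u}\in\Ker_{\ZZ}(T):x^{D_1({\bf u})^+}-x^{D_1({\bf u})^-}\in\mathcal{G}\}$ to also index the reduced Gr\"obner basis of $I_{A_2}$ relative to $<_2$; so that basis is exactly $\mathcal{G}'$, and if $\ini_{<_1}(I_{A_1})$ is generated by $x^{D_1({\bf u}_1)^+},\dots,x^{D_1({\bf u}_r)^+}$ then, by the same token, $\ini_{<_2}(I_{A_2})$ is generated by $y^{D_2({\bf u}_1)^+},\dots,y^{D_2({\bf u}_r)^+}$.

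The whole construction is symmetric in $A_1$ and $A_2$, so the assignment $\mathcal{G}\mapsto\mathcal{G}'$ has an inverse and is a bijection between the sets of reduced Gr\"obner bases of the two ideals; taking unions then gives the claimed bijection between their universal Gr\"obner bases. I expect the case analysis of the previous paragraph to be the main obstacle: one must confirm that it is the full \emph{reduced} Gr\"obner basis — and not merely the initial ideal — that transfers, so the standardness of the trailing monomials has to be tracked, and the bookkeeping of positive and negative parts across the mixed bouquets needs some care; the passage between monomial orders and generic weight vectors in the first two paragraphs, on the other hand, is routine.
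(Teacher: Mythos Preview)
Your proposal is correct and follows essentially the same route as the paper: represent $<_1$ by a weight vector, transport it through the bouquet isomorphisms so that $\langle \mathbf{w}^{(2)},D_2(\mathbf{u})\rangle=\langle \mathbf{w}^{(1)},D_1(\mathbf{u})\rangle$, and then verify by a block-by-block case analysis on the bouquet types (non-mixed, free, mixed) that divisibility relations among the monomials $x^{D_1(\mathbf{u})^{\pm}}$ transfer to the corresponding relations among $y^{D_2(\mathbf{u})^{\pm}}$. The two presentations differ only in packaging: the paper builds a nonnegative weight $\mathbf{w}'$ directly (spreading $\cb_{B_i^{(1)}}^{\pm}\cdot\mathbf{w}$ over the positive and negative coordinates of $\cb_{B_i^{(2)}}$) and checks the Gr\"obner and reducedness conditions separately, whereas you take the simpler weight supported on the first coordinate of each block, shift it into the positive orthant, and encapsulate both checks in the single symmetric equivalence $D_1(\mathbf{u}')^{\epsilon}\le D_1(\mathbf{u})^{\delta}\iff D_2(\mathbf{u}')^{\epsilon}\le D_2(\mathbf{u})^{\delta}$.
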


\begin{proof}

It is enough to prove that if  $\mathcal G=\{x^{D_1({\bf u}_1)^+}-x^{D_1({\bf u}_1)^-}, \ldots , x^{D_1({\bf u}_r)^+}-x^{D_1({\bf u}_r)^-} \}$ is a reduced Gr{\"o}bner basis of the toric ideal $I_{A_1}$ with respect to the monomial order $<_1$ on $K[x_{11},\ldots,x_{1{k_1}}, \ldots, x_{s1},\ldots,x_{s{k_s}}]$ for some ${\bf u}_1, \ldots ,{\bf u}_r\in \Ker_{\mathbb{Z}}(T)$,
 then there exists a monomial order $<_2$ on $K[y_{11},\ldots,y_{1l_1},\ldots, y_{s1},\ldots, y_{sl_s}]$ with respect to which  $$\mathcal G'=\left \{y^{D_2({\bf u}_1)^+}-y^{D_2({\bf u}_1)^-}, \ldots ,y^{D_2({\bf u}_r)^+}-y^{D_2({\bf u}_r)^-} \right \}$$ is a reduced Gr{\"o}bner basis of the toric ideal $I_{A_2}$.

We recall that the nonzero coordinates of $\cb_{B_i^{(1)}}$ are  $c_{i1}^{(1)}, c_{i2}^{(1)}, \ldots, c_{ik_i}^{(1)}$. We write each vector $\cb_{B_i^{(1)}}$ as $\cb_{B_i^{(1)}}^+-\cb_{B_i^{(1)}}^-$, where for $i\in \omega$ we have $\cb_{B_i^{(1)}}^-=\mathbf{0}$.
The coordinates of $\cb_{B_i^{(1)}}^+$ and $\cb_{B_i^{(1)}}^-$ are denoted by $c_{ij}^{(1)+}$ and $c_{ij}^{(1)-}$, respectively. One of them is zero and the other $|c_{ij}^{(1)}|$. Similarly, one defines the corresponding notation  for the coordinates of $\cb_{B_i^{(2)}}$.

By \cite[Proposition 1.11]{St}, there exists a weight vector $\wb\in\NN^{k_1+\cdots+k_s}$, say  $\wb=(w_{11}, \ldots, w_{1k_1}, \ldots, w_{s1}, \ldots, w_{sk_s})$ such that $\ini_{<_1}(I_{A_1})=\ini_{\mathbf{w}}(I_{A_1})$.
 Without loss of generality we may assume that $x^{D_1({\bf u}_p)^+}>_1x^{D_1({\bf u}_p)^-}$, i.e.
 the dot product $\mathbf{w} \cdot D_1({\bf u}_p)>0$, for any $p=1,\ldots,r$. Thus
 $$\dots+ \left(c_{i1}^{(1)+}w_{i1}+\dots +c_{i{k_i}}^{(1)+}w_{i{k_i}} \right)u_{pi}-\left (c_{i1}^{(1)-}w_{i1}+\dots +c_{i{k_i}}^{(1)-}w_{i{k_i}} \right)u_{pi}+\dots >0.$$
  We denote by $\cb_{B_i^{(1)}}^+\cdot \mathbf{w}=c_{i1}^{(1)+}w_{i1}+c_{i2}^{(1)+}w_{i2}+\dots +c_{ik_i}^{(1)+}w_{ik_i}$ and by
   $\cb_{B_i^{(1)}}^-\cdot \mathbf{w}=c_{i1}^{(1)-}w_{i1}+c_{i2}^{(1)-}w_{i2}+\dots +c_{ik_i}^{(1)-}w_{ik_i}$. Next, if we let
  $r^{+}_{i}$ be the number of positive elements in $\cb_{B_i^{(2)}}$ and $r^{-}_i$ the number of negative elements in $\cb_{B_i^{(2)}}$, then we define the vector $\mathbf{w}'\in\mathbb{N}^{l_1+\cdots+l_s}$ such that its $it$-th coordinate is equal to $\frac{\cb_{B_i^{(1)}}^+\cdot \mathbf{w}}{c^{(2)}_{it}r^{+}_{i}} $ if $c^{(2)}_{it}>0$ and $\frac{\cb_{B_i^{(1)}}^-\cdot \mathbf{w}}{-c^{(2)}_{it}r^{-}_{i}} $ if
$c^{(2)}_{it}<0$. Note that always ${r_i}^+\geq 1$, but
${r_i}^-=0$ for non mixed bouquets. In this last case there is no reason to define the number $\frac{\cb_{B_i^{(1)}}^-\cdot \mathbf{w}}{-c^{(2)}_{it}r^{-}_{i}} $ since
$c^{(2)}_{it}$ is always greater than $0$.

Let $\prec$ be an arbitrary monomial order on $K[y_{11},\ldots,y_{1l_1},\ldots,y_{s1},\ldots,y_{sl_s}]$. We define by $<_2$ the monomial
order $\prec_{\mathbf{w}'}$ on $K[y_{11},\ldots,y_{1l_1},\ldots,y_{s1},\ldots,y_{sl_s}]$ induced by $\prec$ and $\mathbf{w}'$, see \cite[Chapter 1]{St} for definition.
Next we prove that $x^{D_1({\bf u})^+}>_1x^{D_1({\bf u})^-}$ implies that $y^{D_2({\bf u})^+}>_2y^{D_2({\bf u})^-}$.
Indeed, since $x^{D_1({\bf u})^+}>_1x^{D_1({\bf u})^-}$ then $\mathbf{w}\cdot D_1({\bf u})>0$, where ${\bf u}=(u_1,\ldots,u_s)$.  It follows now from the definition of $\mathbf{w}'$ that
$$ \mathbf{w}'\cdot D_2({\bf u})=\mathbf{w}\cdot D_1({\bf u})>0, $$
which implies $y^{D_2({\bf u})^+}>_2y^{D_2({\bf u})^-}$, as desired.
In particular, we obtain that $y^{D_2({\bf u_p})^+}>_2y^{D_2({\bf u_p})^-}$ for all $p=1, \ldots, r$, and as a consequence
$$\left (y^{D_2({\bf u_1})^+}, \ldots, y^{D_2({\bf u_r})^+}\right )\subset\ini_{<_2}(I_{A_2}).$$
For the converse inclusion, let $y^{D_2({\bf u})^+}- y^{D_2({\bf u})^-}\in I_{A_2}$ be an arbitrary element,
and say that $y^{D_2({\bf u})^+}>_2 y^{D_2({\bf u})^-}$, the other case being similar.
It follows from the previous considerations that $x^{D_1({\bf u})^+}>_1x^{D_1({\bf u})^-}$
and thus there exists an integer $p$ such that $x^{D_1({\bf u_p})^+}|x^{D_1({\bf u})^+}$. That means $x_{ij}^{(D_1({\bf u_p})^+)_{ij}}\big|x_{ij}^{(D_1({\bf u})^+)_{ij}}$ for every $i$, $j$, where $(D_1({\bf u_p})^+)_{ij}$ and $(D_1({\bf u})^+)_{ij}$ represent the $ij$-th coordinate of the vectors $D_1({\bf u_p})^+$ and  $D_1({\bf u})^+$, respectively. The exponent of $x_{ij}$ in $x^{D_1({\bf u_p})^+}$ is $c_{ij}^{(1)+}u_{pi}^++c_{ij}^{(1)-}u_{pi}^-$, while in $x^{D_1({\bf u})^+}$ is $c_{ij}^{(1)+}u_{i}^++c_{ij}^{(1)-}u_{i}^-$. We consider the following cases:\\
\underline{Case 1:} Let $i \in \omega, u_i >0$. In this case the bouquets
$B_{i}^{(1)}$, $B_{i}^{(2)}$  are non-mixed and all $c_{ij}^{(1)}>0$,
$c_{it}^{(2)}>0$. Then
$x_{ij}^{c_{ij}^{(1)+}u_{pi}^++c_{ij}^{(1)-}u_{pi}^-}\Big|x_{ij}^{c_{ij}^{(1)+}u_{i}^++c_{ij}^-u_{i}^-}$
means $x_{ij}^{c_{ij}^{(1)+}u_{pi}^++0u_{pi}^-}\Big|x_{ij}^{c_{ij}^{(1)+}u_{i}^++0u_i^-}$.
Thus $u_{pi}^+\leq u_i$. Therefore
$y_{it}^{c_{it}^{(2)+}u_{pi}^++0u_{pi}^-}\Big|y_{it}^{c_{it}^{(2)+}u_{i}^++0}$ and thus $y_{it}^{(D_2({\bf u_p})^+)_{it}}\big|y_{it}^{(D_2({\bf u})^+)_{it}}$. \\
\underline{Case 2:} Let $i \in \omega, u_i \leq 0$. In this case the bouquets $B_{i}^{(1)}$, $B_{i}^{(2)}$  are non-mixed and all $c_{ij}^{(1)}>0$, $c_{it}^{(2)}>0$. Then $x_{ij}^{c_{ij}^{(1)+}u_{pi}^++c_{ij}^{(1)-}u_{pi}^-}\Big|x_{ij}^{c_{ij}^{(1)+}u_{i}^++c_{ij}^{(1)-}u_{i}^-}$
implies that $x_{ij}^{c_{ij}^{(1)+}u_{pi}^++0u_{pi}^-}$ divides $x_{ij}^{c_{ij}^{(1)+}0+0u_{i}^-}=1$, and thus $u_{pi}^+=0$. Therefore $1=y_{it}^{c_{it}^{(2)+}0+0u_{pi}^-}\Big| y_{it}^{c_{it}^{(2)+}u_{i}^++0u_i^-}=1$ and consequently $y_{it}^{(D_2({\bf u_p})^+)_{it}}\big|y_{it}^{(D_2({\bf u})^+)_{it}}$.\\
\underline{Case 3:} Let $i \not \in \omega$,
and $B^{(1)}_i,B^{(2)}_i$ are free.
In this case, since $\bf{u_p},\bf{ u}\in\Ker_{\ZZ}(T)$, then both $u_{pi}, u_i$ are zero and consequently  $1=y_{it}^{c_{it}^{(2)+}u_{pi}^++c_{it}^{(2)-}u_{pi}^-}\Big|y_{it}^{c_{it}^{(2)+}u_{i}^++c_{it}^{(2)-}u_{i}^-}=1$. Therefore, we obtain $y_{it}^{(D_2({\bf u_p})^+)_{it}}\big|y_{it}^{(D_2({\bf u})^+)_{it}}$.\\
\underline{Case 4:} Let $i \not \in \omega$, $u_i >0$, and $B^{(1)}_i,B^{(2)}_i$ are mixed.
In this case there is at least one $c_{ij}^{(1)}>0$ and at least one $c_{ik}^{(1)}<0$.
For $ij$ such that $c_{ij}^{(1)}>0$  the relation
$x_{ij}^{c_{ij}^{(1)+}u_{pi}^++c_{ij}^{(1)-}u_{pi}^-}\Big|x_{ij}^{c_{ij}^{(1)+}u_{i}^++c_{ij}^{(1)-}u_{i}^-}$
implies $x_{ij}^{c_{ij}^{(1)+}u_{pi}^++0u_{pi}^-}\Big|x_{ij}^{c_{ij}^{(1)+}u_{i}^++0u_i^-}$.
Thus we have $u_{pi}^+\leq u_i$. For $ik$ such that $c_{ik}^{(1)}<0$  the relation
$x_{ik}^{c_{ik}^{(1)+}u_{pi}^++c_{ik}^{(1)-}u_{pi}^-}\Big|x_{ik}^{c_{ik}^{(1)+}u_{i}^++c_{ik}^{(1)-}u_{i}^-}$
implies $x_{ik}^{0u_{pi}^++c_{ik}^{(1)-}u_{pi}^-}\Big|x_{ik}^{0u_{i}^++c_{ik}^{(1)-}0}=1$, and then $u_{pi}^-=0$. Summing up, we have obtained $0\leq u_{pi}^+=u_{pi}\leq u_i$. Therefore
$y_{it}^{c_{it}^{(2)+}u_{pi}^++c_{it}^{(2)-}u_{pi}^-}\Big|y_{it}^{c_{it}^{(2)+}u_{i}^++c_{it}^{(2)-}u_{i}^-}$, and thus $y_{it}^{(D_2({\bf u_p})^+)_{it}}\big|y_{it}^{(D_2({\bf u})^+)_{it}}$. \\
\underline{Case 5:} Let $i\not\in\omega$, $u_i\leq 0$, and $B^{(1)}_i,B^{(2)}_i$ are mixed. In this case there is at least one $c_{ij}^{(1)}>0$ and at least one $c_{ik}^{(1)}<0$.
For $ij$ such that $c_{ij}^{(1)}>0$  the relation
$x_{ij}^{c_{ij}^{(1)+}u_{pi}^++c_{ij}^{(1)-}u_{pi}^-}\Big|x_{ij}^{c_{ij}^{(1)+}u_{i}^++c_{ij}^{(1)-}u_{i}^-}$
implies $x_{ij}^{c_{ij}^{(1)+}u_{pi}^++0u_{pi}^-}\Big|x_{ij}^{c_{ij}^{(1)+}0+0u_{i}^-}=1$.
Thus we have $u_{pi}^+=0$.
For $ik$ such that $c_{ik}^{(1)}<0$ the relation
$x_{ik}^{c_{ik}^{(1)+}u_{pi}^++c_{ik}^{(1)-}u_{pi}^-}\Big|x_{ik}^{c_{ik}^{(1)+}u_{i}^++c_{ik}^{(1)-}u_{i}^-}$
implies $x_{ik}^{0u_{pi}^++c_{ik}^{(1)-}u_{pi}^-}\Big|x_{ik}^{0u_{i}^++c_{ik}^{(1)-}u_{i}^-}$, and then $u_{pi}^-\leq u_i^-$. Summing up, we have obtained that  $u_i=-u_i^-\leq-u_{pi}^-=u_{pi}\leq 0$. Therefore
$y_{it}^{c_{it}^{(2)+}u_{pi}^++c_{it}^{(2)-}u_{pi}^-}\Big|y_{it}^{c_{it}^{(2)+}u_{i}^++c_{it}^{(2)-}u_{i}^-}$ and thus $y_{it}^{(D_2({\bf u_p})^+)_{it}}\big|y_{it}^{(D_2({\bf u})^+)_{it}}$.

From all of the above cases we conclude that $y^{D_2({\bf u_p})^+}\big|y^{D_2({\bf u})^+}$. Therefore $y^{D_2({\bf u})^+}\in (y^{D_2({\bf u_1})^+}, \ldots,y^{D_2({\bf u_r})^+})$
and thus $\mathcal G'$ is a Gr\"obner basis of $I_{A_2}$ with respect to $<_2$.
Finally, to prove that $\mathcal G'$ is reduced we argue by contradiction.
This implies that there exists an integer $p$ such that $y^{D_2({\bf u_p})^-}\in\ini_{<_2}(I_{A_2})$,
so $y^{D_2({\bf u_p})^-}$ is divisible by some $y^{D_2({\bf u_j})^+}$.
This in turn, by a similar argument like the division argument before, implies that $x^{D_1({\bf u_j})^+}|x^{D_1({\bf u_p})^-}$, a contradiction since $\mathcal G$ is reduced.
Therefore we obtain that $\mathcal G'$ is a reduced Gr\"obner basis with respect to $<_2$, as desired.
It follows that  $f: D_1(\textbf{u}) \mapsto D_2(\bf{u})$ is a one-to-one correspondence between the universal Gr{\"o}bner bases of $I_{A_1}$ and $I_{A_2}$. \qed

\end{proof}

Note that every reduced Gr{\"o}bner basis is also a Markov basis, not necessarily minimal, thus the set of indispensables elements of $I_A$ is a subset of the intersection of all the reduced Gr{\"o}bner bases. Note
that the set of indispensables from \cite[Theorem 2.4]{OH} is the intersection of the lexicographic reduced
 Gr{\"o}bner bases or from \cite[Theorem 13]{OV} is the intersection of the degree reverse lexicographic reduced
 Gr{\"o}bner bases.  Therefore the set of indispensables elements of $I_A$ is  the intersection of all reduced Gr{\"o}bner bases. This observation and Theorem  \ref{Universal} provides a second proof of the fact that there exists a one-to-one correspondence between the indispensable binomials of the two $T_{\omega}$-robust toric ideals $I_{A_1}$ and $I_{A_2}$.

\section{Strongly Robust Ideals}
\label{section:stronglyrobust}
A binomial ${\bf x}^{{\bf u}^+}-{\bf x}^{{\bf u}^-}\in I_A$
 is called primitive if there is no other binomial ${\bf x}^{{\bf v}^+}-{\bf x}^{{\bf v}^-}\in I_A$
 such that ${\bf x}^{{\bf v}^+}$
 divides ${\bf x}^{{\bf u}^+}$ and ${\bf x}^{{\bf v}^-}$
 divides ${\bf x}^{{\bf u}^-}$. The set of the
primitive binomials is finite, it is called the Graver basis of $I_A$ and is denoted by $\Gr(I_A)$, \cite[Chapter 4]{St}. By $\Gr(A)$ we denote the set $\{ {\bf u}| {\bf x}^{{\bf u}^+}-{\bf x}^{{\bf u}^-}\in \Gr(I_A) \} $, it is the set of elements in $\Ker_\mathbb{Z}(A)$ that do not have a {\it proper conformal decomposition}.
An element
${\bf u}\in  \Ker_\mathbb{Z}(A)$ has a conformal decomposition
${\bf u}= {\bf v}+_{c} {\bf w}$ if it can  be written in the form ${\bf v}+{\bf w}$,
where ${\bf v}, {\bf w} \in  \Ker_{\ZZ}(A)$ and  ${\bf u}^+={\bf v}^++ {\bf w}^+$,  ${\bf u}^-={\bf v}^-+ {\bf w}^-$. The conformal decomposition is called proper if both
 ${\bf v}$ and  ${\bf w}$ are not zero.

A {\it strongly robust} toric ideal  is a toric ideal $I_A$ for which  the Graver basis $\Gr(I_A)$
is a minimal system of generators. Furthermore, since the ideal $I_A$ is positively graded, then any minimal system of binomial generators is
a subset of the Graver basis, see for example \cite[Theorem 2.3]{CTV2}, and therefore there is only one minimal system of generators, which is the Graver basis. We conclude that, in a strongly robust toric ideal, every Graver basis element is indispensable. As was explained at the end of Section 2, the set of indispensable elements is the intersection of all reduced Gr{\"o}bner bases, which is a subset of the Universal Gr{\"o}bner basis, which at its own is a subset of the Graver basis (see \cite[Chapter 4]{St}). Thus, we conclude that for a strongly robust toric ideal $I_A$, the
following sets are identical: the set of indispensable elements, any minimal system of generators, any reduced Gr{\"o}bner basis, the Universal Gr{\"o}bner basis and the Graver basis.

The following results will enable us to define a simplicial complex, {\em the strongly robustness complex}, which determines the strongly robustness property for
toric ideals.
According to Theorem \ref{markov}, for any two $T_{\omega}$-robust toric ideals $I_{A_1}, I_{A_2}$, we have that
 $D_1({\bf u})$ is an indispensable element of $A_1$ if and only if $D_2({\bf u})$ is an indispensable element of $A_2$, for an element ${\bf u}\in \Ker_\mathbb{Z}(T)$. Actually ${\bf u}\in \Gr(T)$, since indispensable elements of $I_{A_1}$ are in the Graver basis of $I_{A_1}$ and there is a one-to-one correspondence between $\Gr(I_T)$ and $\Gr(I_{A_1})$, given by the map ${\bf u}\mapsto D_1({\bf u})$, see \cite[Theorem 1.11]{PTV}. Therefore, the set of elements ${\bf u}$ which belong to $\Gr(T)$, such that $D({\bf u})$ is indispensable in a
 $T_{\omega}$-robust toric ideal $I_{A}$,
 does not depend on the $I_A$ chosen, but only on $T$ and $\omega$. Thus, the following definition is meaningful.
\begin{Definition}
\label{T_robust}
 Let $I_T$ be a simple toric ideal and $\omega \subseteq [s]$. We denote by $$S_{\omega}(T)=\{{\bf u}\in \Gr(T)| \ D({\bf u})\in S(A)\}$$ and call $S_{\omega}(T)$ the $T_{\omega}$-indispensable set, where $I_A$ is an
 $T_{\omega}$-robust toric ideal and $S(A)$ is the set of indispensable elements of $A$.

\end{Definition}

  In the next example we compute several sets $S_{\omega}(T)$ for different $\omega$'s. It is easy to observe an important  property of the sets $S_{\omega}(T)$, displayed in Proposition \ref{subset},
that will help us introduce a simplicial complex on the set $[s]$.

\begin{Example} \label{ExInd} {\em Let  $I_T$ be the simple toric ideal with $T=(7 \ 8 \ 9 )$. Then, using $4ti2$ \cite{4ti2}, we obtain that $\Gr(T)$ has the following 11 elements: $(1,-2,1)$, $(5,-1,-3)$, $(4,1,-4)$, $(0,9,-8)$, $(1,7,-7)$,  $(2,5,-6)$, $(3,3,-5)$, $(9,0,-7)$, $(6,-3,-2)$, $(8,-7,0)$, $(7,-5,-1)$.
 For each subset $\omega\subseteq [3]$ we compute with $4ti2$ \cite{4ti2} a Markov basis for one example of a $T_{\omega}$-robust ideal. Using the simple Algorithm 1 of \cite[Section 3]{JSC} we compute the indispensable elements for each example. Then, Theorem~\ref{markov} gives us that the $T_{\omega}$-indispensable sets for different $\omega$'s are: \\$S_{\emptyset}(T)=\Gr(T)$,\\
$S_{\{1\}}(T)=\{(1,-2,1),(5,-1,-3),(4,1,-4),  (9,0,-7),(6,-3,-2),(7,-5,-1),\\(8,-7,0)\}$, \\$S_{\{2\}}(T)=\{(1,-2,1),(5,-1,-3),(4,1,-4),(0,9,-8),(1,7,-7),(2,5,-6),(3,3,-5),  \\(6,-3,-2),(7,-5,-1),(8,-7,0)\}$, \\$S_{\{3\}}(T)=\{(1,-2,1),(5,-1,-3),(4,1,-4),(0,9,-8),(1,7,-7),(2,5,-6),(3,3,-5), \\ (9,0,-7)\}$,\\
$S_{\{1,2\}}(T)=\{(1,-2,1), (5,-1,-3), (4,1,-4), (6,-3,-2), (7,-5,-1), (8,-7,0)\}$,\\
$S_{\{1,3\}}(T)=\{(1,-2,1),(5,-1,-3),(4,1,-4),(9,0,-7)\}$,\\
$S_{\{2,3\}}(T)=\{(1,-2,1), (5,-1,-3), (4,1,-4), (0,9,-8), (1,7,-7), (2,5,-6),\\ (3,3,-5)\}$
and $S_{\{1,2,3\}}(T)=\{(1,-2,1),(5,-1,-3),(4,1,-4)\}.$
}
\end{Example}

For the next proof we will exploit  the notion of semiconformal decomposition which characterizes the indispensable elements of toric ideals (see \cite[Lemma 3.10]{HS} and \cite[Proposition 1.1]{CTV}). We recall from \cite[Definition 3.9]{HS} that for vectors ${\bf u},{\bf v},{\bf w}\in\Ker_{\mathbb{Z}}(A)$ such that ${\bf u}={\bf v}+{\bf w}$, the sum is said to be a semiconformal decomposition of ${\bf u}$, written ${\bf u}={\bf v}+_{sc} {\bf w}$, if $v_i>0$ implies that $w_i\geq 0$, and $w_i<0$ implies that $v_i\leq 0$, for all $1\leq i\leq n$.  The decomposition is called {\it proper} if both ${\bf v}, {\bf w}$ are nonzero. The set of indispensable elements ${\MS}(A)$ of $A$ consists  of all nonzero vectors in $\Ker_{\ZZ}({A})$ with no proper semiconformal decomposition, see \cite[Proposition 1.1]{CTV}.

 \begin{Proposition}\label{subset}
Let $I_T\subset K[x_1,\dots , x_s]$ be a simple toric ideal and $\omega_1\subseteq \omega_2 \subseteq [s]$.   Then we have that $S_{\omega_1}(T)\supseteq S_{\omega_2}(T)$.
\end{Proposition}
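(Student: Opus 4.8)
The plan is to recast membership in $S_\omega(T)$ as a purely combinatorial condition on additive decompositions of vectors of $\Ker_\ZZ(T)$, and then compare this condition for $\omega_1$ and for $\omega_2$. The two tools are the semiconformal characterization of indispensables recalled just above (\cite[Proposition 1.1]{CTV}) and the isomorphism $D\colon\Ker_\ZZ(T)\to\Ker_\ZZ(A)$ of \cite[Theorem 1.9]{PTV} attached to a $T_\omega$-robust ideal $I_A$. Recall first that $\cb_{B_i}$ is supported exactly on the $i$-th bouquet block, so all of its coordinates $c_{i1},\dots,c_{ik_i}$ are nonzero; among the non-free bouquets they are all positive iff $B_i$ is non-mixed, i.e.\ iff $i\in\omega$, and of both signs iff $B_i$ is mixed, i.e.\ iff $i\notin\omega$; and for the (at most one) free bouquet $B_i$ one has $u_i=0$ for every ${\bf u}\in\Ker_\ZZ(T)$, a fact about $T$ alone.

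Next I would describe how a proper semiconformal decomposition ${\bf P}+_{sc}{\bf Q}$ of $D({\bf u})$ in $\Ker_\ZZ(A)$ looks in terms of ${\bf u}$. Since $D$ is a linear isomorphism, ${\bf P}=D({\bf v})$ and ${\bf Q}=D({\bf w})$ for unique ${\bf v},{\bf w}\in\Ker_\ZZ(T)$, both nonzero, with ${\bf u}={\bf v}+{\bf w}$, and the coordinates of $D({\bf v})$ and $D({\bf w})$ in the $i$-th block are $c_{ij}v_i$ and $c_{ij}w_i$ for $1\le j\le k_i$. Running through the blocks and comparing the signs of $c_{ij}v_i$ with those of $c_{ij}w_i$, one checks that ${\bf P}+_{sc}{\bf Q}$ is semiconformal if and only if, for every $i\in\omega$, $v_i>0$ and $w_i<0$ do not both hold (the semiconformal condition at the single coordinate $i$), and for every $i\notin\omega$, $v_iw_i\ge0$ (the conformal condition at $i$, vacuously true for the free index). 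Combined with \cite[Proposition 1.1]{CTV}, this gives the reformulation: ${\bf u}\in S_\omega(T)$ if and only if ${\bf u}\in\Gr(T)$ and ${\bf u}$ has no proper $\omega$-admissible decomposition, by which I mean a pair of nonzero ${\bf v},{\bf w}\in\Ker_\ZZ(T)$ with ${\bf u}={\bf v}+{\bf w}$ such that for each $i\in\omega$ the inequalities $v_i>0$ and $w_i<0$ do not both hold, and for each $i\notin\omega$ one has $v_iw_i\ge0$. (In particular this re-proves that $S_\omega(T)$ is well defined.)

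Granting this, the proposition is immediate. Take ${\bf u}\in S_{\omega_2}(T)$, so ${\bf u}\in\Gr(T)$, and suppose toward a contradiction that ${\bf u}\notin S_{\omega_1}(T)$; by the reformulation there is a proper $\omega_1$-admissible decomposition ${\bf u}={\bf v}+{\bf w}$. I claim it is also a proper $\omega_2$-admissible decomposition, contradicting ${\bf u}\in S_{\omega_2}(T)$. Indeed, for $i\notin\omega_2$ we have $i\notin\omega_1$ (since $\omega_1\subseteq\omega_2$), so $v_iw_i\ge0$; and for $i\in\omega_2$, either $i\in\omega_1$, where $v_i>0$ and $w_i<0$ cannot both hold by $\omega_1$-admissibility, or $i\in\omega_2\setminus\omega_1$, where $i\notin\omega_1$ gives $v_iw_i\ge0$, which again rules out $v_i>0$ together with $w_i<0$. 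Hence ${\bf u}\in S_{\omega_1}(T)$, i.e.\ $S_{\omega_1}(T)\supseteq S_{\omega_2}(T)$. The one genuinely technical step is the block-by-block sign bookkeeping behind the reformulation — non-mixed blocks produce the semiconformal constraint at their coordinate, mixed blocks upgrade it to the stronger conformal constraint $v_iw_i\ge0$, and the free block produces nothing; after that the containment is forced by the trivial implication "$v_iw_i\ge0$" $\Rightarrow$ "$v_i>0$ and $w_i<0$ do not both hold", i.e.\ the conformal constraint is stronger than the one-sided semiconformal one, which is precisely why enlarging $\omega$ can only shrink $S_\omega(T)$.
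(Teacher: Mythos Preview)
Your proof is correct and follows essentially the same approach as the paper's: both arguments rest on the semiconformal characterization of indispensables from \cite{CTV} together with the block-by-block sign analysis showing that a non-mixed block imposes the one-sided semiconformal constraint at coordinate $i$ while a mixed block upgrades it to the conformal constraint $v_iw_i\ge 0$. The only cosmetic difference is packaging: you first extract an intrinsic ``$\omega$-admissible decomposition'' criterion on $\Ker_\ZZ(T)$ and then observe that $\omega_1$-admissible implies $\omega_2$-admissible, whereas the paper works directly with two concrete $T_{\omega_1}$- and $T_{\omega_2}$-robust ideals $I_{A_1},I_{A_2}$ and transports a semiconformal decomposition of $D_1({\bf u})$ to one of $D_2({\bf u})$; the underlying case analysis and the key implication (conformal at $i\notin\omega_1$ $\Rightarrow$ semiconformal at $i\in\omega_2$) are identical.
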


\begin{proof}
 Consider the $T_{\omega_2}$-robust toric ideal $I_{A_2}$. According to Definition~\ref{T_robust}, we have that the bouquet ideal of $I_{A_2}$ is $I_T$ and $\omega_2 =\{i\in [n]| \ {B_{i}^{(2)}}\ \text{is non-mixed}\}.$ Consider now a subset $\omega_1 \subseteq \omega_2$. Similarly, from Definition~\ref{T_robust}, the bouquet ideal of a $T_{\omega_1}$-robust toric ideal $I_{A_1}$ is $I_T$ and $\omega_1 =\{i\in [n]| \ {B_{i}^{(1)}}\ \text{is non-mixed}\}.$

Let ${\bf u} \in S_{\omega_2}(T)$, our goal is to show that ${\bf u} \in S_{\omega_1}(T)$. According to the Definition~\ref{T_robust}, applied for $S_{\omega_2}(T)$, we have $D_2({\bf u})\in S(A_2)$. Since $D_2({\bf u})\in \Ker_{\mathbb{Z}}(A_2)$, from the one-to-one correspondence between the elements of $\Ker_{\mathbb{Z}}(T)$ and the elements of each of $\Ker_{\mathbb{Z}}(A_1)$ and $\Ker_{\mathbb{Z}}(A_2)$ (see \cite[Theorem 1.9]{PTV}), we have that $D_1({\bf u})\in \Ker_{\mathbb{Z}}(A_1)$ as well.

Suppose that $D_1({\bf u}) = D_1({\bf v}) +_{sc} D_1({\bf w})$
is a semiconformal decomposition of $D_1({\bf u})$, where $D_1({\bf v}), D_1({\bf w}) \in \Ker_{\mathbb{Z}}(A_1)$. For each  $i=1,\ldots, s$,  at coordinate level we have that
$$\left (c^{(1)}_{i1}, c^{(1)}_{i2}, \ldots c^{(1)}_{ik_i} \right)u_i = \left(c^{(1)}_{i1}, c^{(1)}_{i2}, \ldots c^{(1)}_{ik_i} \right)v_i +_{sc} \left(c^{(1)}_{i1}, c^{(1)}_{i2}, \ldots c^{(1)}_{ik_i} \right)w_i ,$$
where we know that all $c^{(1)}_{i1}$ are positive, while the  $c^{(1)}_{ij}$ are all positive if the bouquet $B^{(1)}_i$ is non-mixed, and some of the $c^{(1)}_{ij}$ are negative if the bouquet $B^{(1)}_i$ is mixed.

For all $i$, we have  from  $c^{(1)}_{i1}u_i= c^{(1)}_{i1}v_i+_{sc}c^{(1)}_{i1}w_i$ that the sum $u_i= v_i+_{sc}w_i$ of the $i$-th coordinates is a semiconformal  sum, since all $c^{(1)}_{i1}>0$.
 For $i\not \in \omega_1$, we have a stronger relation, namely that the sum  $u_i= v_i+_{sc}w_i$ of the $i$-th coordinates is a conformal  sum.
 Indeed, if $B^{(1)}_i$ is free we have $u_i=v_i=w_i=0$ and the sum $0=0+0$ is conformal. Otherwise, if $B^{(1)}_i$ is mixed we distinguish three cases.

\underline{Case 1:}  If $u_i > 0$, then $c^{(1)}_{i1} u_i >0$ and the sum being semiconformal implies that $c^{(1)}_{i1} w_i \geq 0 \Rightarrow w_i \geq 0$. Since $B^{(1)}_i$ is mixed at least one of the $c^{(1)}_{ij}$, for $j\geq 2$ is negative, in which case $c^{(1)}_{ij} u_i < 0$ and from the semiconformality we have
that $c^{(1)}_{ij} v_i \leq 0 \Rightarrow v_i \geq0$. Therefore we proved that
$$
u_i>0 \Rightarrow v_i \geq 0 \text{ and } w_i \geq 0 .
$$

\underline{Case 2:} If $u_i < 0$, then $c_{i1}^{(1)} u_i <0$ and the sum being semiconformal implies that $c^{(1)}_{i1} v_i \leq 0 \Rightarrow v_i \leq 0$.
 As before, since $B^{(1)}_i$ is mixed, at least one $c_{ij}^{(1)}$ is negative, and thus  $c^{(1)}_{ij} u_i > 0$. It follows now from the semiconformality that $c^{(1)}_{ij} w_i \geq 0 \Rightarrow w_i \leq 0$, therefore
$$
u_i<0 \Rightarrow v_i \leq 0 \text{ and } w_i \leq 0 .
$$

\underline{Case 3:} Finally, if $u_i = 0$, then $c^{(1)}_{i1} u_i =0$
and the sum being semiconformal implies that $c^{(1)}_{i1} v_i \leq 0  \text{ and } c^{(1)}_{i1} w_i \geq 0$.
Since $c^{(1)}_{i1}>0$, we have that $v_i \leq 0  \text{ and } w_i \geq 0$. Since $B^{(1)}_i$ is mixed, there exists a negative $c^{(1)}_{ij}$, and we know from the semiconformality that $c^{(1)}_{ij} v_i \leq 0 \text{ and } c^{(1)}_{ij} w_i \geq0$, which  implies that $v_i \geq 0 \text{ and }  w_i \leq0$. Again we have proved that
$$
u_i=0 \Rightarrow v_i = 0 \text{ and } w_i = 0 \text{.}
$$
 The three cases analyzed before imply that for $i\not \in \omega_1$ the sum $u_i= v_i+w_i$ of the $i$-th coordinates is a conformal sum.

For $i\in \omega_2$, the sum $u_i= v_i+_{sc}w_i$ is semiconformal and stays semiconformal if it is multiplied with a positive $c^{(2)}_{ij}$.
For  $i\not \in \omega_2$, we have that $i\not \in \omega_1$, since $\omega_1\subseteq \omega_2$, therefore the sum $u_i= v_i+w_i$ is conformal and remains conformal
if it is multiplied by a positive or a negative $c^{(2)}_{ij}$. Since conformality implies semiconformality then $D_2({\bf u}) = D_2({\bf v}) +_{sc} D_2({\bf w})$, and $D_2({\bf u})\in S(A_2)$. Thus either $D_2({\bf v})={\bf 0}$ or $D_2({\bf w})={\bf 0}$, since indispensable elements do not have proper semiconformal decomposition \cite[Proposition 1.1]{CTV}.
But this in turn gives that either ${\bf v}={\bf 0}$ or ${\bf w}={\bf 0}$ which implies that either $D_1({\bf v})={\bf 0} \text{ or } D_1({\bf w}) ={\bf 0}$. This means that  $D_1({\bf u})\in S(A_1)$ or equivalently ${\bf u} \in S_{\omega_1}(T)$, as desired. \qed

\end{proof}

\begin{Definition}  The set $\Delta _T =\{\omega\subseteq [s] \ | \ S_{\omega }(T)=\Gr(T)\}$ is called the {\it strongly robustness complex} of $T$.
\end{Definition}

\begin{Corollary}
 The strongly robustness complex of $T$ is a simplicial complex.
\end{Corollary}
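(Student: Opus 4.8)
The plan is to check the two defining properties of an (abstract) simplicial complex on the vertex set $[s]$: that $\Delta_T$ is nonempty, and that it is closed under passing to subsets. Both will follow at once from Proposition~\ref{subset} together with the containment $S_\omega(T)\subseteq \Gr(T)$ that is built into Definition~\ref{T_robust}.

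For nonemptiness I would observe that $\emptyset\in\Delta_T$. A $T_\emptyset$-robust toric ideal $I_A$ is strongly robust (see \cite[Section 4]{PTV}), so every element of $\Gr(I_A)$ is indispensable; via the bijection ${\bf u}\mapsto D({\bf u})$ between $\Gr(T)$ and $\Gr(I_A)$ of \cite[Theorem 1.11]{PTV}, this forces $S_\emptyset(T)=\Gr(T)$. For the downward-closure property, let $\omega_2\in\Delta_T$ and $\omega_1\subseteq\omega_2$. Applying Proposition~\ref{subset} to $\omega_1\subseteq\omega_2\subseteq[s]$ gives $S_{\omega_1}(T)\supseteq S_{\omega_2}(T)=\Gr(T)$, while the reverse inclusion $S_{\omega_1}(T)\subseteq\Gr(T)$ is immediate from the definition of $S_{\omega_1}(T)$. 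Hence $S_{\omega_1}(T)=\Gr(T)$, that is $\omega_1\in\Delta_T$, so $\Delta_T$ is a simplicial complex.

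I expect no genuine obstacle here: all the substance sits in the monotonicity statement of Proposition~\ref{subset}, and the corollary is a one-line pinching argument once that is in hand. The only point worth making explicit is that $S_\omega(T)$ is, by construction, always a subset of $\Gr(T)$, which is exactly what turns the two inclusions into an equality.
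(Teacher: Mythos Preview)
Your proof is correct and follows essentially the same approach as the paper: the downward-closure argument is identical, using Proposition~\ref{subset} to sandwich $S_{\omega_1}(T)$ between $\Gr(T)$ and $\Gr(T)$. The only minor difference is that you fold the nonemptiness of $\Delta_T$ (i.e., $\emptyset\in\Delta_T$) into the proof itself, whereas the paper proves only downward closure in the corollary and remarks immediately afterward that $\Delta_T$ is never the void complex.
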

\begin{proof}
 Suppose that $\omega \in \Delta _T$ and let $\omega '\subseteq \omega$.
 Then $\Gr(T)=S_{\omega }(T)\subseteq S_{\omega ' }(T)\subseteq \Gr(T)$, by Proposition \ref{subset}.
 Thus $S_{\omega }=\Gr(T)$ and $\omega' \in \Delta _T$. \qed

\end{proof}
In the Example \ref{ExInd} the strongly robustness complex $\Delta _T$ is the empty set. In the case that all bouquets are mixed, Corollary 4.4 of \cite{PTV} states that the ideal is strongly robust, thus  $S_{\emptyset }(T)=\Gr(T)$ and therefore $\emptyset \in \Delta _T$.
This means that $\Delta _T$  is never the void simplicial complex.

 Boocher and Robeva posed the problem of  describing combinatorially the robustness property for toric ideals, see \cite[Question 3.5]{BR}. According to Theorem~\ref{face} below, the simplicial complex $\Delta _T$ determines the strongly robustness property for toric ideals. However, it is still an open problem whether the robustness property of toric ideals is equivalent to the
 strongly robustness property, see Section~\ref{section:codim2} for more details and results.

\begin{Theorem} \label{face} Let $I_A$ be a $T_{\omega}$-robust toric ideal. The toric ideal $I_A$ is strongly robust if and only if $\omega$ is a face of the strongly
 robustness complex $\Delta _T$.
\end{Theorem}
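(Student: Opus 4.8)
The plan is to unwind the definitions and reduce everything to the behaviour of the indispensable set $S(A)$ for a $T_\omega$-robust toric ideal $I_A$. Recall that $I_A$ is strongly robust precisely when its Graver basis $\Gr(I_A)$ is a minimal system of generators, equivalently (since $I_A$ is positively graded and the unique minimal generating set lies inside the Graver basis) when every Graver basis element is indispensable, i.e. $\Gr(I_A)=S(A)$. By \cite[Theorem 1.11]{PTV} the map $\mathbf u\mapsto D(\mathbf u)$ is a bijection $\Gr(T)\to\Gr(I_A)$, and by Definition~\ref{T_robust}, $S_\omega(T)=\{\mathbf u\in\Gr(T)\mid D(\mathbf u)\in S(A)\}$. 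Since $S(A)\subseteq\Gr(I_A)$ always, we get the key equivalence
\[
I_A \text{ is strongly robust} \iff S(A)=\Gr(I_A) \iff S_\omega(T)=\Gr(T) \iff \omega\in\Delta_T,
\]
where the last step is just the definition of $\Delta_T$. This already gives both directions; the only thing to verify carefully is that the bijection $D$ restricts correctly, namely that $D(\mathbf u)\in S(A)$ holds for \emph{all} $\mathbf u\in\Gr(T)$ if and only if $S_\omega(T)=\Gr(T)$, which is immediate from the definition of $S_\omega(T)$ together with $S(A)\subseteq\Gr(I_A)$ and the bijectivity of $D:\Gr(T)\to\Gr(I_A)$.

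More explicitly, for the forward direction: assume $I_A$ is strongly robust. Then $S(A)=\Gr(I_A)$, so for every $\mathbf u\in\Gr(T)$ we have $D(\mathbf u)\in\Gr(I_A)=S(A)$, hence $\mathbf u\in S_\omega(T)$; thus $\Gr(T)\subseteq S_\omega(T)$, and since the reverse inclusion $S_\omega(T)\subseteq\Gr(T)$ is part of Definition~\ref{T_robust}, we conclude $S_\omega(T)=\Gr(T)$, i.e. $\omega\in\Delta_T$. For the converse: assume $\omega\in\Delta_T$, so $S_\omega(T)=\Gr(T)$. Take any element $D(\mathbf u)\in\Gr(I_A)$ with $\mathbf u\in\Gr(T)$ (every Graver element of $I_A$ has this form by \cite[Theorem 1.11]{PTV}). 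Since $\mathbf u\in\Gr(T)=S_\omega(T)$, Definition~\ref{T_robust} gives $D(\mathbf u)\in S(A)$. Hence $\Gr(I_A)\subseteq S(A)$, and combined with the always-valid inclusion $S(A)\subseteq\Gr(I_A)$ we obtain $S(A)=\Gr(I_A)$. This says every Graver basis element of $I_A$ is indispensable, so the Graver basis is the unique minimal system of generators, i.e. $I_A$ is strongly robust.

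I expect essentially no obstacle here: the theorem is a bookkeeping consequence of Proposition~\ref{subset} (which makes $\Delta_T$ a bona fide simplicial complex, so that the phrase ``face of $\Delta_T$'' is meaningful), of Definition~\ref{T_robust} (well-definedness of $S_\omega(T)$, already argued in the text), and of the Graver/indispensable bijection \cite[Theorem 1.11]{PTV}. The one point worth spelling out is the chain ``strongly robust $\iff$ $\Gr(I_A)$ is the minimal generating set $\iff$ every Graver element is indispensable,'' which uses positive gradedness of $I_A$ and \cite[Theorem 2.3]{CTV2}; this was already recorded in the paragraph preceding the definition of the strongly robustness complex, so it can simply be cited. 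Thus the proof is short and the emphasis should be on stating the three equivalences cleanly rather than on any computation.
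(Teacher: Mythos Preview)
Your proposal is correct and follows essentially the same route as the paper's own proof: both unwind the chain of equivalences $I_A$ strongly robust $\iff S(I_A)=\Gr(I_A)\iff S_\omega(T)=\Gr(T)\iff \omega\in\Delta_T$, with your version simply spelling out in more detail why each step holds (in particular the role of the bijection $D:\Gr(T)\to\Gr(A)$ from \cite[Theorem 1.11]{PTV}). One small notational slip: you write $S(A)\subseteq\Gr(I_A)$, mixing the vector set $S(A)$ with the binomial set $\Gr(I_A)$; keep these consistent (either both on the vector side, $S(A)\subseteq\Gr(A)$, or both on the binomial side, $S(I_A)\subseteq\Gr(I_A)$).
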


\begin{proof} A toric ideal $I_A$ is strongly robust  if and only if every Graver basis element is indispensable. Thus for the
 $T_{\omega}$-robust ideal $I_A$ we have: $I_A$ is strongly robust if and only if $S(I_A)=\Gr(I_A)$ if and only if $S_{\omega}(T)=\Gr(T)$ if and only if $\omega \in \Delta _T$. \qed

\end{proof}

The following theorem provides a way to compute the strongly robustness complex of a simple toric ideal $I_T$.
By $\Lambda (T)$ we denote the second Lawrence lifting of $T$, which is the $(m+s)\times 2s$ matrix
${\begin{pmatrix} T & 0 \\
I_s & I_s
\end{pmatrix}}.$ By $\Lambda (T)_{\omega}$ we denote the matrix taken from $\Lambda (T)$ by removing the $(m+i)$-th row and the $(s+i)$-th column for each $i\in \omega$. For example, for $
 T={\footnotesize\begin{pmatrix}
12 & 9 & 8 & 0 \\
0 & 3 & 4 & 12
\end{pmatrix}} $ and $\omega=\{2, 4\}$, we have that $$\Lambda (T)= {\footnotesize\begin{pmatrix}
12 & 9 & 8 & 0 & 0 & 0 & 0 & 0 \\
0 & 3 & 4 & 12 & 0 & 0 & 0 & 0\\
1 & 0 & 0 & 0 & 1 & 0 & 0 & 0 \\
0 & 1 & 0 & 0 & 0 & 1 & 0 & 0 \\
0 & 0 & 1 & 0 & 0 & 0 & 1 & 0 \\
0 & 0 & 0 & 1 & 0 & 0 & 0 & 1 \\
\end{pmatrix}} \ \ \text{ and } \ \ \Lambda (T)_{\omega}= {\footnotesize\begin{pmatrix}
12 & 9 & 8 & 0 & 0 & 0  \\
0 & 3 & 4 & 12 & 0 & 0 \\
1 & 0 & 0 & 0 & 1 & 0  \\
0 & 0 & 1 & 0 & 0  & 1 \\
\end{pmatrix}}.$$
\begin{Theorem} \label{Lawrence}  The set $\omega$ is a face of the
 strongly robustness complex $\Delta _T$ if and only if $\Lambda (T)_{\omega}$ is strongly robust.
\end{Theorem}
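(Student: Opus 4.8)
The plan is to show that $\Lambda(T)_{\omega}$ is itself a $T_{\omega}$-robust toric ideal; granting this, Theorem~\ref{face} applied to $\Lambda(T)_{\omega}$ gives at once that $\Lambda(T)_{\omega}$ is strongly robust if and only if $\omega$ is a face of $\Delta_T$. I would first carry this out assuming $T$ has no free bouquet, reducing the general case to it at the end. To begin, keep the original numbering of the columns, so that $\Lambda(T)_{\omega}$ has columns indexed by $[s]\cup\{s+j : j\notin\omega\}$, and write a vector as $(\textbf{v},\textbf{w})$ with $\textbf{v}\in\ZZ^{s}$ and $\textbf{w}$ indexed by $[s]\setminus\omega$. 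The top $m$ rows force $T\textbf{v}=\textbf{0}$ and the surviving row $m+j$ forces $v_j+w_j=0$ for $j\notin\omega$, so $\textbf{v}\mapsto(\textbf{v},(-v_j)_{j\notin\omega})$ is a lattice isomorphism $\Ker_{\ZZ}(T)\to\Ker_{\ZZ}(\Lambda(T)_{\omega})$; since $\Ker_{\ZZ}(T)\cap\NN^{s}=\{\textbf{0}\}$, this also yields $\Ker_{\ZZ}(\Lambda(T)_{\omega})\cap\NN^{2s-|\omega|}=\{\textbf{0}\}$, so $\Lambda(T)_{\omega}$ does define a positively graded toric ideal.

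Transporting a $\ZZ$-basis of $\Ker_{\ZZ}(T)$ through this isomorphism, the Gale transform row of column $i$ (for $i\in[s]$) is the $i$-th Gale row $\bb_i$ of $T$, while the Gale row of column $s+j$ ($j\notin\omega$) is $-\bb_j$. Since $I_T$ is simple, no two of $\bb_1,\dots,\bb_s$ are rationally proportional, and in the present case none of them is zero; together with $-\bb_j=(-1)\bb_j$ this shows that the bouquets of $\Lambda(T)_{\omega}$ are exactly the singletons $\{i\}$ for $i\in\omega$---each an isolated vertex, hence non-mixed---and the pairs $\{j,s+j\}$ for $j\notin\omega$, each having its only edge with ratio $-1<0$ and hence mixed. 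Evaluating the defining formula for $\cb_{B_k}$, with column $j$ ordered before column $s+j$ so that the leading nonzero coordinate is $+1$, gives $\cb_{B_i}=e_i$ for $i\in\omega$ and $\cb_{B_j}=e_j-e_{s+j}$ for $j\notin\omega$ (where $e_q$ is the unit vector at column $q$), which again exhibits the pairs as mixed.

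Consequently, for every $k\in[s]$ the vector $\ab_{B_k}$ is the $k$-th column of $T$ with zeros appended, so the bouquet matrix of $\Lambda(T)_{\omega}$ is $T$ with $s-|\omega|$ zero rows appended; its kernel lattice is $\Ker_{\ZZ}(T)$, so the bouquet ideal of $\Lambda(T)_{\omega}$ is $I_T$. Hence $\Lambda(T)_{\omega}$ is $T_{\omega}$-robust with non-mixed bouquets indexed precisely by $\omega$, and Theorem~\ref{face} completes the case where $T$ has no free bouquet. If $T$ has a free bouquet it is a singleton $\{i_0\}$ (since $I_T$ is simple), and then $u_{i_0}=0$ for every $\textbf{u}\in\Ker_{\ZZ}(T)$; thus $D(\textbf{u})$, and whether it is indispensable, are independent of whether $i_0\in\omega$, giving $S_{\omega}(T)=S_{\omega\setminus\{i_0\}}(T)$, while on the matrix side passing from $\Lambda(T)_{\omega\setminus\{i_0\}}$ to $\Lambda(T)_{\omega}$ (when $i_0\in\omega$) merely deletes the free variable indexed $s+i_0$ and the now-redundant row $m+i_0$, changing neither the Graver basis nor a minimal generating set. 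So both sides of the equivalence are unaffected by removing $i_0$ from $\omega$, and we may assume $i_0\notin\omega$.

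I expect the only genuinely laborious part to be the verifications behind the second and third paragraphs: computing the Gale rows of $\Lambda(T)_{\omega}$ exactly, and carrying the $\gcd$ and sign normalizations in the definition of $\cb_{B_k}$ carefully enough to be sure the pairs $\{j,s+j\}$ ($j\notin\omega$) come out mixed and the singletons $\{i\}$ ($i\in\omega$) come out non-mixed. There is no conceptual obstacle beyond this bookkeeping, since the substance of the equivalence is already furnished by Theorem~\ref{face}.
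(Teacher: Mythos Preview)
Your proposal is correct and follows the same approach as the paper: compute the Gale rows of $\Lambda(T)_\omega$ to see that its bouquets are the non-mixed singletons $\{i\}$ for $i\in\omega$ and the mixed (or free) pairs $\{j,s+j\}$ for $j\notin\omega$, conclude that $I_{\Lambda(T)_\omega}$ is $T_\omega$-robust, and invoke Theorem~\ref{face}. The paper is slightly more streamlined in that it absorbs the free-bouquet case into the main argument---for $j\notin\omega$ the pair $\{j,s+j\}$ is free when $\bb_j=\mathbf{0}$ and mixed otherwise, in either case not non-mixed---rather than handling it by the separate reduction you give at the end.
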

\begin{proof} Note that if ${\bf b}_1, \ldots, {\bf b}_s$ are the row vectors of  the Gale transform of $T$ then ${\bf b}_1,\ldots, {\bf b}_s, -\textbf{b}_1,\ldots, -{\bf b}_s$ are the row vectors of the Gale transform of $\Lambda (T)$
and ${\bf b}_1, \ldots, {\bf b}_s$, and $-{\bf b}_j$ for $j\not \in \omega$ are the row vectors of  the Gale transform of $\Lambda (T)_{\omega}$. Therefore $\Lambda (T)_{\omega}$ has $s$ bouquets of the form: $\{{\bf a}_i\}$ for $i\in \omega$, and $\{{\bf a}_j, {\bf a}_{j+s}\}$ for $j\not \in \omega$, where the indices are the indices of the columns of $\Lambda (T)$. The former are non-mixed and the later are free if ${\bf b}_j={\bf 0}$ or mixed, since ${\bf b}_{j+s}=-{\bf b}_{j}$. This means that $I_{\Lambda (T)_{\omega}}$ is a
   $T_{\omega}$-robust toric ideal.  Then $\omega \in \Delta _T$ if and only if
$\Lambda (T)_{\omega}$ is strongly robust, by Theorem \ref{face}. \qed

\end{proof}

It follows also that the strongly robustness simplicial complex depends only on the simple toric ideal $I_T$.
\begin{Example} \label{Example1}
{\em Theorem \ref{Lawrence} provides a way to construct strongly robust ideals.
Consider the $2\times 4$ integer matrix
 \[
 T={\footnotesize\begin{pmatrix}
12 & 9 & 8 & 0 \\
0 & 3 & 4 & 12
\end{pmatrix}}. \] Computing the Gale transform one  can easily see that $I_T$ is a simple toric ideal, see Example \ref{Example2}.

To find the vertices of the simplicial complex one first computes, e.g. using 4ti2 \cite{4ti2}, minimal Markov and Graver  bases of $\Lambda (T)_{\{i\}}$ for all $i\in [4]$.  It turns out that  $\Lambda (T)_{\{2\}}$, $\Lambda (T)_{\{4\}}$ are strongly robust, since in both cases the Graver basis and a minimal Markov basis given by 4ti2 are identical, while $\Lambda (T)_{\{1\}}$ and $\Lambda (T)_{\{3\}}$ are not strongly robust. Thus  $
2$ and $4$ are the only vertices of $\Delta _T$.
Finally, in this example, one has to check whether  $\{2, 4\}$ belongs or not to $\Delta _T$. It turns out that $\Lambda (T)_{\{2,4\}}$
is strongly robust, thus, the simplicial complex
$\Delta _T$  is $\{\emptyset, \{2\}, \{4\}, \{2, 4\}\}$. From Theorem~\ref{face}, all  $T_{\emptyset}, T_{\{2\}}, T_{\{4\}}, T_{\{2, 4\}}$-robust toric ideals are strongly robust, while for instance none of the $T_{\{3\}}$ or $T_{\{1,3,4\}}$-robust toric ideals are  strongly robust. To construct a strongly robust ideal take any $\omega \in  \Delta _T$, say $\{2, 4\}$, and choose four primitive integer vectors in such a way that the first and the third are mixed while the second and the fourth are not mixed. For example,  say ${\bf c}_1=(1, -1), {\bf c}_2=(5,6,7,8), {\bf c}_3=(2023, -2022, 11), {\bf c}_4=(13, 14, 15)$.
Computing any integer solution of the equation  $1=\lambda_{i1}c_{i1}+\cdots+\lambda_{im_i}c_{im_i}$ for each vector ${\bf c}_i=(c_{i1}, c_{i2}, \ldots , c_{im_i})$,
for example $1=1\cdot 1+0\cdot(-1)$, $1=-1\cdot 5+1\cdot 6+0\cdot 7+ 0\cdot 8$, $1=1\cdot 2023+1\cdot(-2022)+0\cdot(11)$, $1=0\cdot 13+(-1)\cdot 14+ 1\cdot 15$,   we get the
following generalized Lawrence matrix
\[
\tiny{
D=\
\left( \begin{array}{cccccccccccc}
 12 & 0 & -9 & 9 & 0 & 0   & 8 & 8 & 0  & 0 & 0 & 0  \\
 0 & 0 & -3 & 3 & 0 & 0   & 4 & 4 & 0  & 0 & -12 & 12  \\
 1 & 1 & 0 & 0 & 0 & 0   & 0 & 0 & 0   & 0 & 0 & 0  \\
 0 & 0 & -6 & 5 & 0 & 0   & 0 & 0 & 0   & 0 & 0 & 0  \\
 0 & 0 & -7 & 0 & 5 & 0   & 0 & 0 & 0   & 0 & 0 & 0  \\
 0 & 0 & -8 & 0 & 0 & 5   & 0 & 0 & 0   & 0 & 0 & 0  \\
 0 & 0 & 0 & 0 & 0 & 0   & 2022 & 2023 & 0    & 0 & 0 & 0  \\
 0 & 0 & 0 & 0 & 0 & 0   & -11 & 0 & 2023   & 0 & 0 & 0  \\
 0 & 0 & 0 & 0 & 0 & 0   & 0 & 0 & 0  & -14 & 13 & 0  \\
 0 & 0 & 0 & 0 & 0 & 0   & 0 & 0 & 0  & -15 & 0 & 13
\end{array} \right)  \in \ZZ^{10\times 12},}
\]
for more details  see \cite[Section 2]{PTV}.
Then $I_D$ is a $T_{\{2, 4\}}$-robust ideal, therefore it is strongly robust
by Theorem \ref{face}, since $\{2, 4\}\in \Delta _T$.

In fact, Theorem \ref{face} implies that all strongly robust ideals are taken in this way since: 1) every toric ideal is a $T_{\omega}$-robust ideal for some $T$ and $\omega$, and 2) all toric ideals have a representation in the form $I_{A'}$ where $A'$ is a generalized Lawrence matrix, see  \cite[Corollary 2.3]{PTV}.
}
\end{Example}

\section{The codimension two case}
\label{section:codim2}

Let $A$ be an integer matrix $m\times n$ of rank $n-2$ such that the toric ideal $I_A$ is positively graded. Then the columns of $A$ form an acyclic vector configuration, see \cite[Section 6.2]{Z}.  Let  $\{ {\bf b}_1, {\bf b}_2, \dots, {\bf b}_n\}$ be the Gale transform of $A$, where ${\bf b}_i=(b_{i1}, b_{i2})$ for $i=1,\ldots, n$, and $B$ the $n\times 2$ matrix with rows ${\bf b}_1, {\bf b}_2, \dots, {\bf b}_n$. Let $\widetilde{B}=\{ {\widetilde {\bf b}_1}, {\widetilde {\bf b}_2}, \dots, {\widetilde {\bf b}_n}\}$ be the reduced Gale transform of $A$, the set where ${\widetilde {\bf b}_i}=gcd(b_{i1}, b_{i2})^{-1}(-b_{i2}, b_{i1})$ if ${\bf b}_i\not =(0,0)$, and ${\widetilde {\bf b}_i}=(0,0)$ if ${\bf b}_i =(0,0)$.
% and ${\widetilde B}$ the $n\times 2$
% matrix with rows $ {\widetilde {\bf b}_1}, {\widetilde {\bf b}_2}, \dots, {\widetilde {\bf b}_n}$.
Note that  ${\bf b}_i =(0,0)$ means that   ${\bf a}_i$ is a free vector. Since both configurations are totally cyclic, see \cite[Section 6.4]{Z}, we can reorder all the nonzero vectors ${\bf b}_1, {\bf b}_2, \ldots, {\bf b}_{n'}$, $n'\leq n$, in such a way that in the interior of the $\cone({\widetilde {\bf b}_i}, {\widetilde {\bf b}_{i+1}})$ there is no other ${\widetilde {\bf b}_j}$ for all $i=1,\ldots,n$, where ${\widetilde {\bf b}_{n'+1}}= {\widetilde {\bf b}_{1}}$. The minimal generating set of the monoid $\cone({\widetilde {\bf b}_i}, {\widetilde {\bf b}_{i+1}})\cap \mathbb{Z}^2$ is the
Hilbert basis of the $\cone({\widetilde {\bf b}_i}, {\widetilde {\bf b}_{i+1}})$ and is denoted by $H_i$.

\begin{Theorem} \label{Graver-indispensable} \cite[Corollary 2.5]{S}  \cite[Theorem 3.7]{PS}
 Let $A\in \mathbb{Z}^{(n-2)\times n}$ of rank $n-2$ and $Ker_{\mathbb{Z}}(A)\cap \mathbb{N}^n=\{{\bf 0}\}. $
\begin{itemize}
    \item A vector ${\bf u}\in \mathbb{Z}^2$ has either ${\bf u}$ or $-{\bf u}$ in $H_1\cup H_2\cup \cdots \cup H_{n'}$
    if and only if $x^{{\bf u}^+}-x^{{\bf u}^-}$ is in the Graver basis of $I_A$.
    \item A vector ${\bf u}\in \mathbb{Z}^2$ has both ${\bf u}$ and $-{\bf u}$ in $H_1\cup H_2\cup \cdots \cup H_{n'}$
    if and only if $x^{{\bf u}^+}-x^{{\bf u}^-}$ is an indispensable element of $I_A$.
\end{itemize}
\end{Theorem}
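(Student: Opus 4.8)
The plan is to push the whole statement into the rank-two kernel lattice and turn it into planar lattice-point combinatorics. The Gale transform provides a group isomorphism $\phi\colon\ZZ^2\to\Ker_{\ZZ}(A)$, $\phi({\bf u})=(\langle{\bf b}_1,{\bf u}\rangle,\dots,\langle{\bf b}_n,{\bf u}\rangle)$ (this is \cite[Theorem 1.9]{PTV} in codimension two), so that the binomial attached to ${\bf u}\in\ZZ^2$ in the statement is to be read as $x^{\phi({\bf u})^+}-x^{\phi({\bf u})^-}$. Since $\phi$ is linear, a conformal (resp.\ semiconformal) decomposition of $\phi({\bf u})$ in $\Ker_{\ZZ}(A)$ is exactly a splitting ${\bf u}={\bf v}+{\bf w}$ in $\ZZ^2$ whose sign behaviour through the linear forms $\langle{\bf b}_j,\cdot\rangle$ is conformal (resp.\ semiconformal); and $\langle{\bf b}_j,\cdot\rangle$ vanishes precisely on the line $\RR\widetilde{\bf b}_j$ and changes sign across it. Thus everything is governed by the central line arrangement $\{\RR\widetilde{\bf b}_j\}_{j\le n'}$, whose rays generate the cones $\sigma_i:=\cone(\widetilde{\bf b}_i,\widetilde{\bf b}_{i+1})$. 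One preliminary remark is used throughout: the hypothesis $\Ker_{\ZZ}(A)\cap\NN^n=\{{\bf 0}\}$ reads $\bigcap_j\{\langle{\bf b}_j,\cdot\rangle\ge 0\}=\{{\bf 0}\}$, which is equivalent to all the $\sigma_i$ being pointed cones of angular width $<\pi$ that tile $\RR^2$; in particular every chamber of the arrangement is a pointed cone, and every polytope $\{{\bf x}:\langle{\bf b}_j,{\bf x}\rangle\le c_j\ \forall j\}$ is bounded, since its recession cone $\bigcap_j\{\langle{\bf b}_j,\cdot\rangle\le 0\}$ is $\{{\bf 0}\}$.

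For the Graver statement, for ${\bf u}\ne{\bf 0}$ let $C_{\bf u}=\{{\bf x}\in\RR^2:\sgn\langle{\bf b}_j,{\bf x}\rangle\in\{0,\sgn\langle{\bf b}_j,{\bf u}\rangle\}\ \forall j\}$ be the closed chamber of the arrangement through ${\bf u}$: a pointed $2$-dimensional cone when ${\bf u}$ is off every line $\RR\widetilde{\bf b}_j$, and the ray $\RR_{\ge0}{\bf u}$ when ${\bf u}$ lies on one of them. Then $C_{\bf u}\cap\ZZ^2$ is an affine monoid, and ${\bf u}={\bf v}+_c{\bf w}$ is a conformal decomposition iff ${\bf v},{\bf w}\in C_{\bf u}\cap\ZZ^2$; hence $x^{\phi({\bf u})^+}-x^{\phi({\bf u})^-}\in\Gr(I_A)$ iff ${\bf u}$ is irreducible in $C_{\bf u}\cap\ZZ^2$, i.e.\ iff ${\bf u}$ lies in its Hilbert basis. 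So the first bullet is reduced to the lattice-theoretic equivalence: \emph{${\bf u}$ is a Hilbert-basis element of $C_{\bf u}\cap\ZZ^2$ iff ${\bf u}$ or $-{\bf u}$ lies in some $H_i$.} The implication ``$\Leftarrow$'' is short: the arrangement refines the fan $\{\sigma_i\}$, so $C_{\bf u}\subseteq\sigma_k$ whenever ${\bf u}\in\sigma_k$, whence irreducibility of a lattice point in $\sigma_k\cap\ZZ^2$ forces irreducibility in $C_{\bf u}\cap\ZZ^2$, and $C_{-{\bf u}}=-C_{\bf u}$. For ``$\Rightarrow$'' one compares $C_{\bf u}$ with the fan cone $\sigma_i\supseteq C_{\bf u}$ containing ${\bf u}$: the rays $-\widetilde{\bf b}_k$ lying in $\operatorname{int}\sigma_i$ subdivide it into chambers, and a planar staircase argument (using width $<\pi$) shows that a lattice point irreducible in the sub-chamber $C_{\bf u}$ is still irreducible in $\sigma_i\cap\ZZ^2$, or else $-{\bf u}$ is irreducible in the fan cone through $-{\bf u}$; when ${\bf u}$ sits on a line $\RR\widetilde{\bf b}_j$, irreducibility in $C_{\bf u}=\RR_{\ge0}{\bf u}\cap\ZZ^2$ forces ${\bf u}=\pm\widetilde{\bf b}_j$, and $\widetilde{\bf b}_j\in H_{j-1}\cap H_j$.

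For indispensability I would use the semiconformal criterion \cite[Proposition 1.1]{CTV}: $x^{\phi({\bf u})^+}-x^{\phi({\bf u})^-}$ is indispensable iff $\phi({\bf u})$ has no proper semiconformal decomposition. A direct inspection of the defining inequalities shows that ${\bf u}={\bf v}+_{sc}{\bf w}$ is semiconformal exactly when $\langle{\bf b}_j,{\bf v}\rangle\le 0$ for all $j$ with $\langle{\bf b}_j,{\bf u}\rangle\le 0$ and $\langle{\bf b}_j,{\bf w}\rangle\ge 0$ for all $j$ with $\langle{\bf b}_j,{\bf u}\rangle\ge 0$; equivalently ${\bf v}\in P_{\bf u}\cap\ZZ^2$, where $P_{\bf u}=\{{\bf x}:\langle{\bf b}_j,{\bf x}\rangle\le\max(0,\langle{\bf b}_j,{\bf u}\rangle)\ \forall j\}$ is a bounded lattice polytope containing ${\bf 0}$ and ${\bf u}$. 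Hence ${\bf u}$ is indispensable iff $P_{\bf u}\cap\ZZ^2=\{{\bf 0},{\bf u}\}$. Writing $P_{\bf u}=\tau\cap({\bf u}+\rho)$ with $\tau=\bigcap_{\langle{\bf b}_j,{\bf u}\rangle\le0}\{\langle{\bf b}_j,\cdot\rangle\le0\}$ and $\rho=\bigcap_{\langle{\bf b}_j,{\bf u}\rangle\ge0}\{\langle{\bf b}_j,\cdot\rangle\le0\}$, one checks $C_{\bf u}=\tau\cap(-\rho)$ and $C_{-{\bf u}}=(-\tau)\cap\rho$, so the part of $P_{\bf u}$ near ${\bf 0}$ records conformal summands of ${\bf u}$ while the part near ${\bf u}$ records ${\bf u}$ minus conformal summands of $-{\bf u}$; combined with the planar comparison of the previous paragraph this should yield: $P_{\bf u}$ contains a lattice point other than ${\bf 0},{\bf u}$ iff ${\bf u}$ fails to be a Hilbert-basis element of its own fan cone, or $-{\bf u}$ fails likewise, i.e.\ iff ${\bf u}\notin\bigcup_iH_i$ or $-{\bf u}\notin\bigcup_iH_i$. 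Negating gives the second bullet.

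The real obstacle is the planar comparison between chambers of the arrangement $\{\RR\widetilde{\bf b}_j\}$ and the fan cones $\sigma_i$ — showing that irreducibility (``atomicity'') of a lattice point survives from a sub-chamber up to the ambient fan cone, up to replacing ${\bf u}$ by $-{\bf u}$. This is exactly where the two-dimensional staircase structure of Hilbert bases and the totally cyclic hypothesis (every cone of width $<\pi$) are essential; it is, in effect, the content of \cite[Theorem 3.7]{PS} and \cite[Corollary 2.5]{S}, and it is what one should expect to reprove in detail. The boundary cases (${\bf u}$ lying on one of the arrangement lines, unimodular cones, and so on) are handled separately and are routine.
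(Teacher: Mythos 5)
This statement is imported from the literature --- the paper attributes it to \cite[Corollary~2.5]{S} and \cite[Theorem~3.7]{PS} and gives no proof of its own --- so there is no internal argument to compare your attempt against. Judged on its own terms, your framework is the right one and the reductions you carry out are correct: the identification $\Ker_{\ZZ}(A)\cong\ZZ^2$ via $\phi(\mathbf{u})=(\langle\mathbf{b}_1,\mathbf{u}\rangle,\dots,\langle\mathbf{b}_n,\mathbf{u}\rangle)$, the observation that conformal decompositions of $\phi(\mathbf{u})$ correspond to decompositions inside the chamber $C_{\mathbf{u}}$ of the line arrangement $\{\RR\widetilde{\mathbf{b}}_j\}$, the translation of the hypothesis $\Ker_{\ZZ}(A)\cap\NN^n=\{\mathbf{0}\}$ into pointedness and boundedness statements in the plane, and the reformulation of indispensability as $P_{\mathbf{u}}\cap\ZZ^2=\{\mathbf{0},\mathbf{u}\}$ via the semiconformal criterion of \cite[Proposition~1.1]{CTV} are all set up correctly, and the easy direction of the first bullet (membership in some $H_i$ implies irreducibility in the sub-chamber, since $C_{\mathbf{u}}\subseteq\sigma_k$) is complete.

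However, there are two genuine gaps, and they are exactly where the mathematical content lies. First, the forward direction of the Graver characterization --- that a lattice point irreducible in the sub-chamber $C_{\mathbf{u}}\cap\ZZ^2$ must be, up to sign, irreducible in the ambient fan cone $\sigma_i\cap\ZZ^2$ --- is announced as a ``planar staircase argument'' but never carried out; you acknowledge this yourself when you write that it ``is, in effect, the content of \cite[Theorem~3.7]{PS} and \cite[Corollary~2.5]{S}, and it is what one should expect to reprove in detail.'' It is not obvious: a priori, a reducible point $\mathbf{u}=\mathbf{v}+\mathbf{w}$ in $\sigma_i$ with $\mathbf{v},\mathbf{w}$ in different sub-chambers need not produce a conformal decomposition of either $\pm\mathbf{u}$, and ruling this out requires the actual Hilbert-basis/staircase analysis. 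Second, the passage from ``$P_{\mathbf{u}}\cap\ZZ^2\ne\{\mathbf{0},\mathbf{u}\}$'' to ``$\mathbf{u}\notin\bigcup_iH_i$ or $-\mathbf{u}\notin\bigcup_iH_i$'' is only hinted at (``should yield''). You do correctly establish one implication (a proper conformal decomposition of $\mathbf{u}$ or of $-\mathbf{u}$ manufactures a lattice point of $P_{\mathbf{u}}$ other than $\mathbf{0},\mathbf{u}$), but the converse --- that an arbitrary lattice point $\mathbf{v}\in\tau\cap(\mathbf{u}+\rho)$, which need not lie in $C_{\mathbf{u}}=\tau\cap(-\rho)$ nor have $\mathbf{u}-\mathbf{v}\in C_{\mathbf{u}}$, can be converted into a Hilbert-basis failure for $\mathbf{u}$ or $-\mathbf{u}$ --- is precisely the non-trivial step and is missing. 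Until those two lemmas are proved, this is an outline of the strategy rather than a proof of the theorem.
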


Let $I_A$ be a $T_{\omega}$-robust ideal of codimension 2, and ${\widetilde B}=\{{\widetilde {\bf b}_1}, \ldots, {\widetilde {\bf b}_n}\}$
be the reduced Gale transform of $A$ and $\{{\widetilde {\bf t}_1}, \ldots, {\widetilde {\bf t}_s}\}$ be the reduced Gale transform of $T$. Using the  isomorphism $D$ from $ \Ker_{\ZZ}({T})$ to $ \Ker_{\ZZ}(A)$, we get that if ${\bf t}_i=(t_{i1}, t_{i2})$ then
for an ${\bf a}_j$ that is the $k$-th element of the bouquet $B_i$ we have ${\bf b}_j=(c_{ik}t_{i1},c_{ik}t_{i2}) $. If $c_{ik}$ is positive then ${\widetilde {\bf b}_j}={\widetilde {\bf t}_i}$, and if $c_{it}$ is negative then ${\widetilde {\bf b}_j}=-{\widetilde {\bf t}_i}$.
Thus, if ${\bf a}_j$ belongs to a non-mixed or free bouquet $B_i$, then ${\widetilde {\bf b}_j}={\widetilde {\bf t}_i}$. For a mixed bouquet $B_i$, we have ${\widetilde {\bf b}_j}={\widetilde {\bf t}_i}$ for at least one ${\bf a}_j\in B_i$ and we also have ${\widetilde {\bf b}_k}=-{\widetilde {\bf t}_i}$ for at least one ${\bf a}_k\in B_i$.

\begin{Theorem} \label{polygon} Let  $I_T$ be a simple codimension 2 toric ideal. Then the strongly robustness complex $\Delta _T$ is a simplicial subcomplex of the  simplex with vertices $i\in [s]$,
such that $\widetilde{\bf t}_i$ is not a vertex of the centrally symmetric polygon $$P=\conv\left(\left\{\widetilde{\bf t}_i, -\widetilde{\bf t}_i\big| \ i\in [s]\right\}\right). $$
\end{Theorem}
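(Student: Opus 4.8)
The plan is to argue one vertex at a time. Since $\Delta _T$ is a simplicial complex, it suffices to show that whenever $\{i_0\}$ is a face of $\Delta _T$ the vector $\widetilde{\bf t}_{i_0}$ is not a vertex of $P$; I prove the contrapositive, so assume $\widetilde{\bf t}_{i_0}$ is a vertex of $P$ (in particular $\widetilde{\bf t}_{i_0}\neq{\bf 0}$) and derive $\{i_0\}\notin\Delta _T$. From the discussion preceding the theorem and the proof of Theorem \ref{Lawrence}, $A:=\Lambda (T)_{\{i_0\}}$ is $T_{\{i_0\}}$-robust and the nonzero vectors of its reduced Gale transform are exactly $\widetilde{\bf t}_{i_0}$ together with $\widetilde{\bf t}_j$ and $-\widetilde{\bf t}_j$ for every $j\neq i_0$ with $\widetilde{\bf t}_j\neq{\bf 0}$. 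Because $I_T$ is simple, the nonzero $\widetilde{\bf t}_i$ are pairwise non-parallel (otherwise two would lie in a common bouquet), so $-\widetilde{\bf t}_{i_0}$ is neither one of, nor parallel to one of, the reduced Gale vectors of $A$.

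Order the nonzero reduced Gale vectors of $A$ cyclically and let ${\bf p},{\bf q}$ be the two neighbours of $\widetilde{\bf t}_{i_0}$; thus ${\bf p},{\bf q}\in\{\pm\widetilde{\bf t}_j : j\neq i_0\}$ and $\cone({\bf p},\widetilde{\bf t}_{i_0})$ is a cone of $A$. Adjoining the missing direction $-\widetilde{\bf t}_{i_0}$ to the reduced Gale transform of $A$ produces the centrally symmetric set $\{\pm\widetilde{\bf t}_i : \widetilde{\bf t}_i\neq{\bf 0}\}$, in which ${\bf p},\widetilde{\bf t}_{i_0},{\bf q}$ are consecutive, hence so are $-{\bf p},-\widetilde{\bf t}_{i_0},-{\bf q}$; removing $-\widetilde{\bf t}_{i_0}$ again, $-{\bf p}$ and $-{\bf q}$ are consecutive reduced Gale vectors of $A$ and $-\widetilde{\bf t}_{i_0}$ lies in the interior of $\cone(-{\bf p},-{\bf q})$, which is therefore the only cone of $A$ whose interior contains $-\widetilde{\bf t}_{i_0}$. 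Now $\widetilde{\bf t}_{i_0}$ is a primitive generator of an extreme ray of $\cone({\bf p},\widetilde{\bf t}_{i_0})$, hence it lies in the union $H_1\cup\cdots\cup H_{n'}$ of Hilbert bases attached to $A$, and by Theorem \ref{Graver-indispensable} the corresponding binomial lies in $\Gr(I_A)$. If $\{i_0\}\in\Delta _T$ then $I_A$ is strongly robust by Theorem \ref{Lawrence}, so that binomial is indispensable, and Theorem \ref{Graver-indispensable} forces $-\widetilde{\bf t}_{i_0}\in H_1\cup\cdots\cup H_{n'}$; by the previous sentence $-\widetilde{\bf t}_{i_0}$ must then be a Hilbert basis element of $\cone(-{\bf p},-{\bf q})$.

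The decisive point is the fact that a Hilbert basis element $w$ of a two‑dimensional cone $\cone(x,y)$ with primitive ray generators $x,y$ always lies in the triangle $\conv\{{\bf 0},x,y\}$, i.e.\ $w=\lambda x+\mu y$ with $\lambda,\mu\ge 0$ and $\lambda+\mu\le 1$. (The weaker bounds $\lambda,\mu\le 1$ are immediate, since $\lambda\ge 1$ gives the proper decomposition $w=x+(w-x)$; the sharper bound $\lambda+\mu\le 1$ comes from the classical description of the Hilbert basis as the lattice points on the part of $\partial\,\conv((\cone(x,y)\cap\ZZ^2)\setminus\{{\bf 0}\})$ facing the origin, a convex chain from $x$ to $y$ lying between the origin and the segment $[x,y]$, or alternatively by induction on $|\det(x,y)|$ using the Hirzebruch–Jung subdivision.) Applying this with $w=-\widetilde{\bf t}_{i_0}$ and $(x,y)=(-{\bf p},-{\bf q})$ gives $-\widetilde{\bf t}_{i_0}=\lambda(-{\bf p})+\mu(-{\bf q})$ with $\lambda,\mu\ge 0$, $\lambda+\mu\le 1$, hence
$$\widetilde{\bf t}_{i_0}=\lambda\,{\bf p}+\mu\,{\bf q}+(1-\lambda-\mu)\,{\bf 0}\in\conv\{{\bf 0},{\bf p},{\bf q}\}.$$
Since $I_T$ is simple of codimension $2$ we have $s\ge 3$ and at most one bouquet is free, so there is $j\neq i_0$ with $\widetilde{\bf t}_j\neq{\bf 0}$; then ${\bf 0}=\tfrac12\widetilde{\bf t}_j+\tfrac12(-\widetilde{\bf t}_j)$ lies in $\conv(\{\pm\widetilde{\bf t}_i : i\in[s]\}\setminus\{\widetilde{\bf t}_{i_0}\})$, and so do ${\bf p}$ and ${\bf q}$, whence $\widetilde{\bf t}_{i_0}\in\conv\{{\bf 0},{\bf p},{\bf q}\}\subseteq\conv(\{\pm\widetilde{\bf t}_i : i\in[s]\}\setminus\{\widetilde{\bf t}_{i_0}\})$, contradicting that $\widetilde{\bf t}_{i_0}$ is a vertex of $P$. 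Therefore $\{i_0\}\notin\Delta _T$, and as every face of $\Delta _T$ consists of such vertices $i_0$, $\Delta _T$ is a subcomplex of the simplex on $\{i\in[s] : \widetilde{\bf t}_i\text{ is not a vertex of }P\}$.

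The step I expect to cost the most work is the Hilbert‑basis‑in‑the‑triangle fact: the parallelogram bound is routine, but the refinement to the triangle — precisely what rules out configurations in which $-\widetilde{\bf t}_{i_0}$ would sit "deep" inside $\cone(-{\bf p},-{\bf q})$ — needs the corner‑chain (or inductive) argument above and deserves to be stated as a separate lemma; everything else is bookkeeping with reduced Gale transforms and the results already established.
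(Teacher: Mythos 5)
Your proof is correct and reaches the theorem by a genuinely different route than the paper. The paper's argument is short because it imports Sullivant's central-symmetry criterion \cite[Theorem 1.2]{S} as a black box: for a $T_\omega$-robust $I_A$ with $\omega\in\Delta_T$, strong robustness forces $\conv(\widetilde B)$ to be centrally symmetric, hence equal to $P$, and then for $i\in\omega$ the vector $-\widetilde{\bf t}_i$ is absent from the generating set $\widetilde B$ (by simplicity), so $\widetilde{\bf t}_i$ cannot be a vertex of $P$. You instead work one vertex $i_0$ at a time, pass to the explicit witness $\Lambda(T)_{\{i_0\}}$, and use only the Hilbert-basis characterization of Graver and indispensable elements (Theorem \ref{Graver-indispensable}) together with a geometric lemma on $2$-dimensional Hilbert bases: a Hilbert basis element of $\cone(x,y)$ with primitive ray generators lies in $\conv\{{\bf 0},x,y\}$. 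That lemma is the right tool, and your sketch of it (the lower lattice-boundary chain from $x$ to $y$ is a convex curve lying on the origin side of the segment $[x,y]$) is sound, though as you note it deserves a separate statement and proof; the parallelogram bound alone would not suffice, precisely as you observe. What your route buys is self-containedness — you are in effect re-deriving the special case of Sullivant's symmetry criterion that the theorem actually needs, from the more primitive Theorem \ref{Graver-indispensable} — at the cost of having to prove and correctly apply the triangle lemma, including checking that $-\widetilde{\bf t}_{i_0}$ sits in the interior of the single cone $\cone(-{\bf p},-{\bf q})$ so that it can only appear in that cone's Hilbert basis. The cyclic-ordering and central-symmetry bookkeeping you do to identify $\cone(-{\bf p},-{\bf q})$ is correct, and simplicity together with positive grading rules out the degenerate situations (antipodal neighbours, too few nonzero $\widetilde{\bf t}_j$) that would make that cone fail to be pointed; it would be worth a sentence in a final write-up noting that simplicity in codimension $2$ forces at least three pairwise non-parallel nonzero $\widetilde{\bf t}_j$, which guarantees pointedness.
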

\begin{proof} Let $I_A$ be a $T_{\omega}$-robust ideal,  ${\widetilde B}=\{{\widetilde {\bf b}_1}, \ldots, {\widetilde {\bf b}_n}\}$ be
 the reduced Gale transform of $A$ and ${\widetilde T}=\{{\widetilde {\bf t}_1}, \ldots, {\widetilde {\bf t}_s}\}$ be the reduced Gale transform of $T$. Then, as explained before, the set ${\widetilde B}$ is equal to $\{{\widetilde {\bf t}_1}, \ldots, {\widetilde {\bf t}_s}\}\cup\{-{\widetilde {\bf t}_i}| \ i\not \in \omega\}$. Then we claim that $\conv({\widetilde B})$ is centrally symmetric if and only if $P=\conv({\widetilde B})$. Indeed, if $P=\conv({\widetilde B})$, then $\conv({\widetilde B})$ is centrally symmetric since $P$ is centrally symmetric. Conversely, if $\conv({\widetilde B})$
is centrally symmetric, then for each ${\widetilde {\bf b}_j}\in {\widetilde B}$ we have  $-{\widetilde {\bf b}_j}\in \conv({\widetilde B})$. Thus, all ${\widetilde {\bf t}_i}$
and $-{\widetilde {\bf t}_i}$ are in $\conv({\widetilde B})$, and $P=\conv(\{\widetilde{\bf t}_i, -\widetilde{\bf t}_i| \ i\in [s]\})$
therefore $$P\subseteq \conv({\widetilde B})=\conv\left(\left\{{\widetilde {\bf t}_1}, \ldots, {\widetilde {\bf t}_s}\right\}\cup\left\{-{\widetilde {\bf t}_i}\big| i\not \in \omega\right\}\right)\subseteq \conv\left(\left \{\widetilde{\bf t}_i, -\widetilde{\bf t}_i\big| i\in [s]\right \} \right)=P,$$ thus $P=\conv({\widetilde B})$.

Let $\omega \in \Delta _T$. Then $S_{\omega}(T)=\Gr(T)$, and therefore $I_A$ is strongly robust. Using \cite[Theorem 1.2]{S} we obtain that $\conv({\widetilde B} )$ is a centrally symmetric polygon, and thus $P=\conv({\widetilde B})$. We conclude that $P=\conv\left({\widetilde B}=\left\{{\widetilde {\bf t}_1}, \ldots, {\widetilde {\bf t}_s}\right\}\cup\left\{-{\widetilde {\bf t}_i}\big|i\not\in\omega\right\} \right)$, which implies that for $i\in \omega$, $\widetilde{\bf t}_i$ is not a vertex of the centrally symmetric polygon $P$. \qed

\end{proof}

\begin{Example} \label{Example2}
{\em Theorem~\ref{polygon} provides an alternative  way to the one developed in Section~\ref{section:stronglyrobust}  to construct the strongly robustness simplicial complex for codimension 2 toric ideals.
Consider again the $2\times 4$ integer matrix
 \[
 T={\footnotesize\begin{pmatrix}
12 & 9 & 8 & 0 \\
0 & 3 & 4 & 12
\end{pmatrix}},
\]
which has the following Gale transform $B$, and whose matrix associated to the reduced Gale transform, denoted by $\widetilde{B}$, respectively,
\[
 B={\footnotesize\begin{pmatrix}
2 & 3  \\
0 & -4 \\
-3 & 0 \\
1 & 1
\end{pmatrix}}
\ \ \  \text{ and } \ \ \  \widetilde{B}={\footnotesize\begin{pmatrix}
-3 & 2  \\
1 & 0 \\
0 & -1 \\
-1 & 1
\end{pmatrix}}.
\]
Then $I_T$ is a simple toric ideal, since no two column vectors are parallel, and the  convex hull of the set $\{\widetilde{\bf b}_i, -\widetilde{\bf b}_i| \ i\in [s]\}$ is a parallelogram $P$ with vertices $\widetilde{\bf b}_1=(-3,2)$, $-\widetilde{\bf b}_1=(3,-2)$,  $-\widetilde{\bf b}_3=(0,1)$, $\widetilde{\bf b}_3=(0,-1)$. According to Theorem~\ref{polygon}, the geometry of this polygon influences the simplicial complex
$\Delta _T$, which is a subcomplex of $\{\emptyset, \{2\}, \{4\}, \{2, 4\}\}$, since $\widetilde{\bf b}_2, \widetilde{\bf b}_4$ are not vertices of $P$. However, by  \cite[Proposition 2.4]{S}, $-\widetilde{\bf b}_2=(-1,0)$ is in the Hilbert basis of the
$\cone(\widetilde{\bf b}_1, \widetilde{\bf b}_3)$ and $-\widetilde{\bf b}_4=(1,-1)$ is in the Hilbert basis of the
$\cone(\widetilde{\bf b}_3, -\widetilde{\bf b}_1)$, both of which do not involve the indices $2, 4$. Therefore, all the sets $\{2\}, \{4\}, \{2, 4\}$ are faces of $\Delta _T$. Thus, $\Delta _T=\{\emptyset, \{2\}, \{4\}, \{2, 4\}\}$.
}
\end{Example}

Finally, an ideal is called robust if the universal Gr{\" o}bner basis is a minimal generating set. It follows from \cite[Theorem 5.10]{T} that the universal Gr\" obner basis is the set of indispensables, and thus every strongly robust toric ideal is robust. In \cite{BBDLMNS}, Boocher  et al. proved that every robust toric ideal associated to a graph is strongly robust and they asked if
for every toric ideal robust implies strongly robust.  In Theorem~\ref{robust}, we prove that this is true for codimension 2 toric ideals. Note that for toric ideals of codimension 1 this is obvious, since they are principal. A key result for the proof is that circuits are always included in the Universal Gr{\" o}bner basis, see \cite[Proposition 4.11]{St}. We recall that {\em circuits} are irreducible binomials with minimal support.

\begin{Lemma} \label{circuit}
Let $I_A$ be a codimension 2 toric ideal. Then $C$ is a circuit if and only if there exists an $i$ such that $C=x^{({B{\widetilde{\bf b}^t_i})}^+}-x^{({B{\widetilde{\bf b}^t_i})}^-}$, where $B$ is the Gale transform of $A$.
\end{Lemma}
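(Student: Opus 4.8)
The plan is to read everything through the Gale transform. Since the columns of $B$ form a $\ZZ$-basis of the saturated lattice $\Ker_\ZZ(A)$, the map ${\bf v}\mapsto B{\bf v}^t$ is a $\ZZ$-isomorphism $\ZZ^2\to\Ker_\ZZ(A)$; in particular $AB=0$, so $B{\widetilde{\bf b}}_i^t\in\Ker_\ZZ(A)$, and because $\Ker_\ZZ(A)$ is saturated in $\ZZ^n$, a vector $B{\bf v}^t$ has coprime coordinates in $\ZZ^n$ exactly when ${\bf v}$ is primitive in $\ZZ^2$. The second ingredient is the elementary computation that the $j$-th coordinate of $B{\widetilde{\bf b}}_i^t$ is the dot product ${\bf b}_j\cdot{\widetilde{\bf b}}_i$, which is a fixed nonzero rational multiple of $\det({\bf b}_i,{\bf b}_j)$ and therefore vanishes precisely when ${\bf b}_j$ is parallel to ${\bf b}_i$ (with the convention that ${\bf 0}$ is parallel to everything). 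Hence $i\notin\supp(B{\widetilde{\bf b}}_i^t)$ and
$$\supp\bigl(B{\widetilde{\bf b}}_i^t\bigr)=\{\,j : {\bf b}_j\ \text{is not parallel to}\ {\bf b}_i\,\}.$$
If ${\bf b}_i={\bf 0}$ then ${\widetilde{\bf b}}_i={\bf 0}$ and the right-hand binomial is $0$, so only indices with ${\bf b}_i\neq{\bf 0}$ are relevant; for those the support above is nonempty because $B$ has rank $2$, so its rows are not all parallel.

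For the direction ``$\Leftarrow$'' I would fix $i$ with ${\bf b}_i\neq{\bf 0}$ and check the two defining properties of a circuit. Primitivity of the coordinates of $B{\widetilde{\bf b}}_i^t$ is immediate from the first ingredient, since ${\widetilde{\bf b}}_i$ is primitive by construction. For minimality of the support, suppose ${\bf 0}\neq{\bf u}=B{\bf v}^t$ with $\supp({\bf u})\subseteq\supp(B{\widetilde{\bf b}}_i^t)$; since $i$ is not in the latter set, ${\bf b}_i\cdot{\bf v}=0$, so ${\bf v}$ lies on the line ${\bf b}_i^{\perp}$, which is spanned by ${\widetilde{\bf b}}_i$. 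Thus ${\bf v}=\lambda{\widetilde{\bf b}}_i$ with $\lambda\neq 0$, so ${\bf u}=\lambda B{\widetilde{\bf b}}_i^t$ is a nonzero scalar multiple of $B{\widetilde{\bf b}}_i^t$ and hence $\supp({\bf u})=\supp(B{\widetilde{\bf b}}_i^t)$. Therefore $B{\widetilde{\bf b}}_i^t$ is a circuit and $x^{(B{\widetilde{\bf b}}_i^t)^+}-x^{(B{\widetilde{\bf b}}_i^t)^-}$ is the corresponding circuit binomial.

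For ``$\Rightarrow$'' I would write the circuit as $C=x^{{\bf u}^+}-x^{{\bf u}^-}$ with ${\bf 0}\neq{\bf u}=B{\bf v}^t$, ${\bf v}\in\ZZ^2$, and first argue that ${\bf v}$ must be orthogonal to some ${\bf b}_i\neq{\bf 0}$: otherwise $\supp({\bf u})=\{j:{\bf b}_j\neq{\bf 0}\}$, whereas for any such $i$ the set $\supp(B{\widetilde{\bf b}}_i^t)$ is a nonempty proper subset of it (it omits $i$), contradicting minimality of $\supp({\bf u})$. Once ${\bf v}\in{\bf b}_i^{\perp}$, and since ${\widetilde{\bf b}}_i$ is a primitive integer vector on that line, we get ${\bf v}=k{\widetilde{\bf b}}_i$ for some $k\in\ZZ\setminus\{0\}$, so ${\bf u}=kB{\widetilde{\bf b}}_i^t$; coprimality of the coordinates of ${\bf u}$ then forces $k=\pm1$, giving $C=x^{(B{\widetilde{\bf b}}_i^t)^+}-x^{(B{\widetilde{\bf b}}_i^t)^-}$ up to interchanging the two monomials, as usual when binomials are identified with their negatives.

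The whole argument is linear algebra over $\ZZ$ in the plane, so there is no single hard step; the point that needs the most care is the interplay between primitivity of ${\widetilde{\bf b}}_i$ in $\ZZ^2$ and coprimality of the coordinates of $B{\widetilde{\bf b}}_i^t$ in $\ZZ^n$, which is exactly where saturatedness of $\Ker_\ZZ(A)$ and the fact that the columns of $B$ form a $\ZZ$-basis enter. As a consistency check with Theorem~\ref{Graver-indispensable}, one sees that the circuits are precisely those primitive kernel vectors whose preimage in $\ZZ^2$ is one of the ray generators ${\widetilde{\bf b}}_i$, which indeed lie in the Hilbert basis of an adjacent cone, so circuits form a subset of the Graver basis as expected.
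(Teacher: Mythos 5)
Your proof is correct and follows essentially the same route as the paper's: in both, the codimension-two hypothesis is used to write any kernel vector as $B{\bf v}^t$, minimality of support forces ${\bf v}$ to be orthogonal to some ${\bf b}_i\neq{\bf 0}$ and hence a scalar multiple of $\widetilde{\bf b}_i$, and primitivity pins down the scalar to $\pm 1$. Your converse is spelled out a bit more fully than the paper's (you verify directly that any nonzero kernel vector supported inside $\supp(B\widetilde{\bf b}_i^t)$ is a multiple of $B\widetilde{\bf b}_i^t$, whereas the paper describes the complement of the support as a union of bouquets and concludes more tersely), but the content is the same.
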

\begin{proof} Let $C$ be a circuit of  $I_A$. Then $C \in I_A$ and has no monomial factor, therefore it is in the form $x^{(B{{\bf u}^t})^+}-x^{(B{{\bf u}^t})^-}$ for some nonzero ${\bf u}\in \mathbb{Z}^2$. Since $I_A$ is a codimension 2 ideal and $C$
has minimal support, then there exists an $i\not \in \supp(C)$ and ${\bf b}_i\not =(0,0)$.
(Note that if ${\bf b}_i=(0,0)$, then ${\bf a}_i$
belongs to the free bouquet and therefore $i$ does not belong to the support of any binomial in the
form $x^{{\bf v}^+}-x^{{\bf v}^-}$, where
$ {\bf v}\in \Ker_{\ZZ}(A)$.)
Then the matrix product ${\bf b}_i \ub^t=0$ implies ${\bf u}=\lambda {\widetilde {\bf b}_i}$
and ${\bf u}, {\widetilde {\bf b}_i}$ are {\em primitive vectors}, they have the greatest common divisor of their coordinates equal to 1,  therefore $\lambda =1$ or $\lambda =-1$. Thus $C=x^{({B{\widetilde{\bf b}^t_i})}^+}-x^{({B{\widetilde{\bf b}^t_i})}^-}$.

For the converse, suppose $C_i=x^{({B{\widetilde{\bf b}^t_i})}^+}-x^{({B{\widetilde{\bf b}^t_i})}^-}$. Then for every $\ab_k\in B_i$, the $i$-th bouquet of $A$, we have ${\bf b}_k\cdot {\widetilde{\bf b}_i}=l\textbf{b}_i\cdot {\widetilde {\bf b}_i}=0$, for some $l\in \mathbb{Q}$, therefore
$k\not \in \supp(C)$. Note also from the comments before Theorem~\ref{polygon} that for $\ab_i,\ab_j$ in the same bouquet we have either ${\widetilde {\bf b}_i}= {\widetilde{\bf b}_j}$ or $ {\widetilde{\bf b}_i}= -{\widetilde{\bf b}_j}$. The complement of the support of $C_i=x^{({B{\widetilde{\bf b}^t_i})}^+}-x^{({B{\widetilde{\bf b}^t_i})}^-}$ is the union of the elements of the bouquet that contains $\ab_i$ and the elements of the free bouquet. Therefore, all the $C_i=x^{({B{\widetilde{\bf b}^t_i})}^+}-x^{({B{\widetilde{\bf b}^t_i})}^-}$ are circuits. Furthermore, the cardinality of the set of circuits is the number of non free bouquets. \qed

\end{proof}

\begin{Theorem} \label{robust}
 Let $I_A$ be a codimension 2 toric ideal. Then $I_A$ is robust if and only if it is strongly robust.
\end{Theorem}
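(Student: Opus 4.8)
The plan is to handle the two implications separately. The implication ``strongly robust $\Rightarrow$ robust'' holds in any codimension and is essentially already recorded above: for a strongly robust toric ideal the Graver basis, every reduced Gr\"obner basis, the universal Gr\"obner basis and the set of indispensable binomials all coincide (using only that $I_A$ is positively graded, \cite[Theorem 2.3]{CTV2}, and \cite[Chapter 4]{St}), so in particular the universal Gr\"obner basis is a minimal generating set. Hence the content is the converse, and this is where codimension $2$ enters.

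For ``robust $\Rightarrow$ strongly robust'' I would first reduce to circuits. If $I_A$ is robust then by \cite[Theorem 5.10]{T} its universal Gr\"obner basis equals its set of indispensable binomials, and since circuits always lie in the universal Gr\"obner basis \cite[Proposition 4.11]{St}, every circuit of $I_A$ is indispensable. Next I would read this off the reduced Gale transform $\widetilde B$ of $A$. By Lemma~\ref{circuit} the circuits are the binomials $C_i$, one for each non-free bouquet $B_i$, and $C_i$ corresponds, under the identification of $\Ker_\ZZ(A)$ with $\ZZ^2$ via $B$ used in Theorem~\ref{Graver-indispensable}, to the primitive vector $\widetilde{\bf b}_i\in\ZZ^2$. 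So by Theorem~\ref{Graver-indispensable}, indispensability of $C_i$ forces both $\widetilde{\bf b}_i$ and $-\widetilde{\bf b}_i$ to lie in $H_1\cup\cdots\cup H_{n'}$; as $\widetilde{\bf b}_i$ is automatically the primitive generator of a ray of one of the cones $\cone(\widetilde{\bf b}_j,\widetilde{\bf b}_{j+1})$, hence in its Hilbert basis, the substance is that $-\widetilde{\bf b}_i\in H_1\cup\cdots\cup H_{n'}$ for every non-free bouquet $B_i$.

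The remaining step is to upgrade this to the statement that $H_1\cup\cdots\cup H_{n'}$ is closed under ${\bf u}\mapsto-{\bf u}$: by Theorem~\ref{Graver-indispensable} that says every element of $\Gr(I_A)$ is indispensable, and then, since $I_A$ is positively graded, $\Gr(I_A)$ coincides with the set of indispensables, which is contained in every minimal generating set while by \cite[Theorem 2.3]{CTV2} some minimal generating set is contained in $\Gr(I_A)$; hence $\Gr(I_A)$ is the unique minimal generating set and $I_A$ is strongly robust. To carry out this upgrade I would invoke the fact --- this is the role of \cite[Lemma 2.7]{S}, together with the classification \cite{PS} --- that the Hilbert basis of a $2$-dimensional cone $\cone({\bf u},{\bf v})$ with ${\bf u},{\bf v}$ primitive is contained in the triangle $\conv(\{{\bf 0},{\bf u},{\bf v}\})$; this gives $H_1\cup\cdots\cup H_{n'}\subseteq\conv(\widetilde B)$, so the previous step yields $-\widetilde{\bf b}_i\in\conv(\widetilde B)$ for every $i$ (trivially also for the free bouquet, where $\widetilde{\bf b}_i={\bf 0}$), whence $-\widetilde B\subseteq\conv(\widetilde B)$ and therefore $\conv(\widetilde B)=-\conv(\widetilde B)$ is centrally symmetric. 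By \cite[Theorem 1.2]{S}, a codimension $2$ toric ideal is strongly robust if and only if its reduced Gale diagram is centrally symmetric, so $I_A$ is strongly robust. Equivalently, one may phrase the last two steps as the codimension $2$ statement that indispensability of all circuits already forces all Graver basis elements to be indispensable.

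The step I expect to be the main obstacle is exactly this last one: propagating indispensability of the finitely many circuits, i.e. of the wall directions $\widetilde{\bf b}_i$ and their negatives, to all of $H_1\cup\cdots\cup H_{n'}$, equivalently to central symmetry of $\conv(\widetilde B)$. The delicate point is that a priori $-\widetilde{\bf b}_i$ might appear as a Hilbert basis element in the \emph{interior} of some cone $\cone(\widetilde{\bf b}_j,\widetilde{\bf b}_{j+1})$ rather than on a wall, so one cannot simply conclude that the set of wall directions is symmetric; the containment $H_1\cup\cdots\cup H_{n'}\subseteq\conv(\widetilde B)$ is precisely what circumvents this, and it is the only non-formal ingredient of the argument, supplied (presumably) by \cite[Lemma 2.7]{S}.
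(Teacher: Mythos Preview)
Your proposal is correct and follows essentially the same route as the paper: robustness plus \cite[Theorem 5.10]{T} and \cite[Proposition 4.11]{St} make every circuit indispensable; Lemma~\ref{circuit} identifies the circuits with the $\widetilde{\bf b}_i$; Theorem~\ref{Graver-indispensable} then yields $-\widetilde{\bf b}_i\in H_1\cup\cdots\cup H_{n'}$ for all $i$; and \cite[Lemma 2.7]{S} converts this into strong robustness. The only difference is cosmetic: the paper quotes \cite[Lemma 2.7]{S} as a black box giving strong robustness directly, whereas you unpack it via the containment $H_j\subseteq\conv(\widetilde B)$, central symmetry of $\conv(\widetilde B)$, and \cite[Theorem 1.2]{S}.
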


\begin{proof}
 Strongly robust toric ideals are always robust.
 Suppose that $I_A$ is robust. From Lemma \ref{circuit} $x^{({B{\widetilde{\bf b}^t_i})}^+}-x^{({B{\widetilde{\bf b}^t_i})}^-}$ is a circuit and circuits are always  in the Universal Gr{\" o}bner basis therefore $x^{({B{\widetilde{\bf b}^t_i})}^+}-x^{({B{\widetilde{\bf b}^t_i})}^-}$ belongs to a minimal set of generators of $I_A$. By
  \cite[Theorem 5.10]{T} every generator of a robust toric ideal is indispensable. Thus also $x^{({B{\widetilde{\bf b}^t_i})}^+}-x^{({B{\widetilde{\bf b}^t_i})}^-}$ is indispensable, and therefore by Theorem \ref{Graver-indispensable}  $-\widetilde{\bf b}_i$ belongs in $H_1\cup H_2\cup \cdots \cup H_{n'}$. Applying now \cite[Lemma 2.7]{S} we obtain that $I_A$ is strongly robust. \qed

\end{proof}

In \cite{BBDLMNS} Boocher et al.  remarked on the central role that circuits play for the robustness property of toric ideals of graphs. They asked the question: if the circuits of $I_A$ are indispensable then is it true  that the toric ideal is robust? And they provide a counterexample of a toric ideal of a graph of
codimension 3 in which all circuits are indispensable but the ideal  $I_A$ is not  robust. The graph $G_t$
in Fig.\ref{8a} is a small alteration of their  counterexample, where in one of the two
cut vertices, instead of one cycle of length 3 we add $t$ cycles of length 3, $t\geq 1$. Every cycle of odd length added  on the second cut vertex increases the codimension by one. The toric ideal of the graph with $t$ cycles has codimension $2+t$.
The Graver basis, of the toric ideal of $G_t$, is identical with the Universal Gr{\" o}bner basis and consists of $t(t-1)/2+8t+4$ elements in the form
$B_{1i}=e_4e_7e_8e_{12}^2c_i-e_5e_6e_{10}^2a_ib_i$, $B_{2}=e_4e_7e_9e_{12}-e_5e_6e_{10}e_{11},$
$B_{3i}=e_1e_5^2e_{10}^2a_ib_i-e_2e_3e_7^2e_{12}^2c_i$, $B_{4i}=e_8e_{11}e_{12}c_i-e_9e_{10}a_ib_i,$
$B_{5}=e_1e_4e_5e_8-e_2e_3e_6e_7$, $B_{6i}=e_1e_4^2e_8^2e_{12}^2c_i-e_2e_3e_6^2e_{10}^2a_ib_i,$
$B_{7}=e_1e_4^2e_8e_9e_{12}-e_2e_3e_6^2e_{10}e_{11},$ $B_{8i}=e_1e_4^2e_9^2a_ib_i-e_2e_3e_6^2e_{11}^2c_i,$
$B_{9}=e_1e_5^2e_8e_{10}e_{11}-e_2e_3e_7^2e_9e_{12},$ $B_{10i}=e_4e_7e_9^2a_ib_i-e_5e_6e_8e_{11}^2c_i,$
$B_{11i}=e_1e_5^2e_8^2e_{11}^2c_i-e_2e_3e_7^2e_9^2a_ib_i,$ $B_{12i}=e_1e_4e_5e_9e_{10}a_ib_i-e_2e_3e_6e_7e_{11}e_{12}c_i,$ $B_{13(i,j)}=a_ib_ic_j-a_jb_jc_i$ where $i,j\in [t],$ see \cite{RTT, TT}.
All of them are circuits except $t$ of them in the form $B_{12i}$. The circuits are minimal and indispensable since they do not have any chords.  The $t$ elements in the form $B_{12i}$ are not, since they are not minimal, every one of them has an even chord, the $e_8$.
For details see Theorems 4.13, 4.14 in \cite{RTT} and 3.4 in \cite{TT}.
\begin{figure}[h]
\begin{center}
\includegraphics[scale=1]{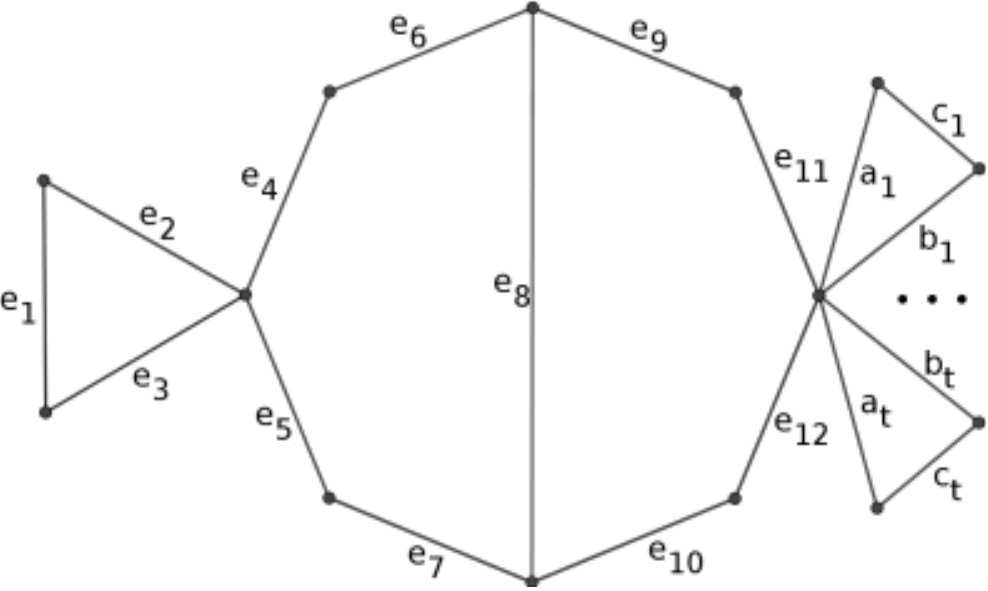}
\caption{The graph $G_t$}
\label{8a}
\end{center}
\end{figure}
All these mean that for these toric ideals $I_{G_t}$ of codimension $2+t\geq 3$ all the circuits  are indispensable and generate  the toric ideal. The $t$ elements $B_{12i}$ are not minimal but they are elements of the Universal Gr{\" o}bner basis (and the Graver basis).  We conclude that all $I_{G_t}$ are not robust.

Therefore the answer to the question is negative for  toric ideals of codimension greater than three. For codimension 1 is obviously true since in this case the ideal is principal. Next corollary shows that the answer to the question of Boocher \& al. (\cite{BBDLMNS}) about circuits and robustness  for codimension two toric ideals is positive.

\begin{Corollary}
 Let $I_A$ be a codimension 2 toric ideal. The circuits of $I_A$ are indispensable  if and only if $I_A$ is  robust.
\end{Corollary}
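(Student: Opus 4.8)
The plan is to prove the two implications separately; together with Lemma~\ref{circuit} and Theorem~\ref{Graver-indispensable}, they essentially repackage the proof of Theorem~\ref{robust}.

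\emph{Robust $\Rightarrow$ circuits indispensable.} Here I would not restrict to the particular circuits $C_i$ but argue for all of them at once: by \cite[Proposition 4.11]{St} every circuit of $I_A$ lies in the Universal Gr\"obner basis, and if $I_A$ is robust this basis is a minimal generating set, so each circuit belongs to a minimal system of generators of $I_A$. By \cite[Theorem 5.10]{T} every element of a minimal generating set of a robust toric ideal is indispensable, hence all circuits of $I_A$ are indispensable. This direction uses nothing specific to codimension $2$.

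\emph{Circuits indispensable $\Rightarrow$ robust.} Since strong robustness implies robustness (or directly by Theorem~\ref{robust}), it suffices to show that $I_A$ is strongly robust. By Lemma~\ref{circuit} the circuits of $I_A$ are exactly the binomials $C_i=x^{(B\widetilde{\bf b}^t_i)^+}-x^{(B\widetilde{\bf b}^t_i)^-}$, one for each non-free bouquet $B_i$, where $B$ is the Gale transform of $A$; the free bouquet is irrelevant, as its indices lie in the support of no binomial of $I_A$. Each $C_i$ is a circuit and hence lies in the Graver basis of $I_A$, so by the first part of Theorem~\ref{Graver-indispensable} at least one of $\widetilde{\bf b}_i,-\widetilde{\bf b}_i$ lies in $H_1\cup\cdots\cup H_{n'}$; the hypothesis that $C_i$ is indispensable, together with the second part of Theorem~\ref{Graver-indispensable}, then forces \emph{both} $\widetilde{\bf b}_i$ and $-\widetilde{\bf b}_i$ into $H_1\cup\cdots\cup H_{n'}$, for every non-free bouquet index $i$. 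This is exactly the configuration from which \cite[Lemma 2.7]{S} produces strong robustness (it is the concluding step of the proof of Theorem~\ref{robust}), so $I_A$ is strongly robust, hence robust.

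The only point requiring care is the bookkeeping with quantifiers: that ``every circuit of $I_A$ is indispensable'' translates, through Lemma~\ref{circuit} and the two parts of Theorem~\ref{Graver-indispensable}, precisely into ``$\widetilde{\bf b}_i$ and $-\widetilde{\bf b}_i$ both lie in $H_1\cup\cdots\cup H_{n'}$ for every non-free bouquet'', and that this matches the hypothesis of \cite[Lemma 2.7]{S}. Since all of these ingredients are already available, I do not expect a genuinely new obstacle; the corollary follows once they are assembled.
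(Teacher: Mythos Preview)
Your proof is correct and follows essentially the same route as the paper: both arguments hinge on Lemma~\ref{circuit}, Theorem~\ref{Graver-indispensable}, and \cite[Lemma 2.7]{S} to pass between ``all circuits indispensable'' and ``strongly robust'', together with the equivalence of robust and strongly robust in codimension~$2$. The only cosmetic difference is that the paper packages everything as a chain of biconditionals ending with Theorem~\ref{robust}, whereas you treat the two implications separately and, for robust $\Rightarrow$ circuits indispensable, argue directly via \cite[Proposition 4.11]{St} and \cite[Theorem 5.10]{T} rather than detouring through strong robustness.
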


\begin{proof}
 It follows from Lemma \ref{circuit} that the circuits are of the form $x^{({B{\widetilde{\bf b}^t_i})}^+}-x^{({B{\widetilde{\bf b}^t_i})}^-}$. Using Theorem~\ref{Graver-indispensable} we obtain  that the circuits of $I_A$ are indispensable  if and only if  both $\widetilde{\bf b}_i$ and $-\widetilde{\bf b}_i$ are in $H_1\cup H_2\cup \cdots \cup H_n$ if and only if  $I_A$ is strongly robust, by \cite[Lemma 2.7]{S}.
  Finally, since $I_A$ is a co-dimension 2 toric ideal we have $I_A$ is strongly robust if and only if it is robust, by Theorem \ref{robust}. \qed

\end{proof}

The preceding discussions highlight the importance of better understanding the properties of the strongly robustness simplicial complex $\Delta _T$ in all codimensions. In particular, some of the key research directions to investigate are the following:
\begin{itemize}
\item Are there any classes of simple toric ideals $I_T$ for which we can determine $\Delta _T$?
    \item What kind of properties of $T$ determine $\Delta _T$?
    \item When is $\Delta _T$ different than the empty simplicial complex?
    \item What can one say about the dimension of $\Delta _T$?

\end{itemize}
Note that, in \cite{PTV}, Petrovi{\' c} et al. posed the question of whether every strongly robust ideal
has a mixed bouquet.
 In \cite{S}, Sullivant proved that codimension 2 strongly robust toric ideals have at least two mixed bouquets and posed the stronger question of whether every strongly robust ideal of codimension $r$ has $r$ mixed bouquets.
Finally, note
that,
in terms of the  strongly robustness simplicial complex $\Delta _T$, Sullivant's question becomes one about the dimension of $\Delta _T$. Namely, for every simple codimension $r$ toric ideal $I_T$ in the polynomial ring of $s$ variables, is it true that $\dim \Delta _T< s-r$?

\bigskip

{\bf Acknowledgments.} The first author has been supported by the Royal Society Dorothy Hodgkin Research Fellowship DHF$\backslash$R1$\backslash$201246. The third author has been partially supported by the grant PN-III-P4-ID-PCE-2020-0029, within PNCDI III, financed by Romanian Ministry of Research and Innovation, CNCS - UEFISCDI.

\end{document}